\documentclass[12pt,twoside,reqno]{amsart}
\usepackage{amsmath, amsthm, amscd, amsfonts, amssymb, graphicx}
\usepackage[utf8]{inputenc}
\usepackage{cleveref}
\crefname{section}{§}{§§}
\Crefname{section}{§}{§§}
\let
\oldsection
\section
\renewcommand{\section}{\vspace{8pt plus 4pt}\oldsection}
\usepackage{lipsum}
\usepackage[super]{nth}\usepackage{xcolor}
\usepackage{tikz}
\usepackage{pgfplots}
\usepackage{mathrsfs}
\usepackage{graphics}
\usepackage{graphicx} 
\usepackage{epsfig}
\usepackage{wrapfig}

\setcounter{page}{1}
\newtheorem{thm}{Theorem}[section]

\newtheorem{corollary}[thm]{Corollary}
\newtheorem{lemma}[thm]{Lemma}
\newtheorem{prop}[thm]{Proposition}

\theoremstyle{definition}
\newtheorem{defn}{Definition}[section]

\theoremstyle{remark}
\newtheorem{rem}{Remark}[section]

\numberwithin{equation}{section}
\linespread{1.5}
\setlength{\paperwidth}{210mm}
\setlength{\paperheight}{290mm}
\setlength{\oddsidemargin}{0mm}
\setlength{\evensidemargin}{0mm}
\setlength{\topmargin}{-20mm}
\setlength{\headheight}{10mm}
\setlength{\headsep}{13mm}
\setlength{\textwidth}{160mm}
\setlength{\textheight}{228mm}
\setlength{\footskip}{15mm}
\setlength{\marginparwidth}{0mm}
\setlength{\marginparsep}{0mm}
\begin{document}
\begin{center}\large{{\bf{  Countable additivity of Henstock-Dunford Integral and Orlicz Space}}} 
\vspace{0.5cm}

Hemanta Kalita$^{1\ast}$ and Bipan Hazarika$^{2}$ 

\vspace{0.5cm}
$^{1}$Department of Mathematics, Patkai Christian College (Autonomous), Dimapur, Patkai 797103, Nagaland, India\\
$^{2}$Department of Mathematics, Gauhati University, Guwahati 781014, Assam, India\\
 
Email: hemanta30kalita@gmail.com; bh\_rgu@yahoo.co.in 
\thanks{$^{\ast}$ The corresponding author.}
\end{center}
\title{}
\author{}
\thanks{}
\date{\today}
\begin{abstract} Given a real Banach space $\mathcal{X}$ and probability space $(\Omega, \Sigma, \mu)$ we characterize the countable additivity of Henstock-Dunford integral for Henstock integrable function taking values in $X$ as those weakly measurable function $ g: \Omega \to \mathcal{X} $ for which $\{y^*g~: y^* \in B_\mathcal{X}^* \} $ is relatively weakly compact in some separable Orlicz space $ H^\phi(\mu) .$  We find  relatively weakly compact in some Orlicz space with Henstock-Gel'fand integral.\\
\parindent=5mm
\noindent{\footnotesize {\bf{Keywords and phrases:}}}   Henstock-Pettis integral; Denjoy-Dunford integral; Orlicz Space.\\
{\footnotesize {\bf{AMS subject classification \textrm{(2010)}:}}} 26A39, 26A42, 28B05, 46E30, 46G10.
\end{abstract}
\maketitle

\maketitle

\pagestyle{myheadings}
\markboth{\rightline {\scriptsize     Kalita and Hazarika}}
        {\leftline{\scriptsize  }}

\maketitle
\section{Introduction and Preliminaries}
    During 1957-1958, R. Henstock and J. Kurzweil independently gave a Riemann type integral called Henstock-Kurzweil integral (or Henstock integral. Let $\mathcal{I}_0$ be a compact interval in $ \mathbb{R}^m$ (or $\mathbb{R}^1$) and $ \mathcal{E} \subset \mathbb{R}^m$ (or $\mathbb{R}$) a measurable subset of $\mathcal{ I}_0.$ $\mu(\mathcal{\mathcal{E}})$ stands for the Lebesgue measure. The Lebesgue integral of a function $ g$ over set $ \mathcal{E} $ will denoted by $ \mathcal{L}\int\limits_\mathcal{E} g.$  $ \mathcal{X}$ is a real Banach space with norm $||.|| $ and $\mathcal{X}^* $ is its dual. $ B_\mathcal{X}^*= \{ y^* \in \mathcal{X} : ||y^*|| \leq 1 \} $ is the closed unit ball in $ \mathcal{X}^*.$   Henstock integral (see \cite{Ye}) is a kind of non absolute integral and contain Lebesgue integral.Recalling Measurable integral are Henstrock integrable, are lebesgue integrable. In \cite{BH} we found Orlicz space with measurable Henstock integral  $H^\phi(\mu)$  is  equivalent to the  classical Orlicz space. It has been proved that this integral is equivalent to the special Denjoy integral, also in \cite{TG} we found Henstock integral is equivalent to  Denjoy integral. In \cite{R.A} Gordon gave two Denjoy-type extensions of the Dunford and Pettis integrals, the Denjoy-Dunford and Denjoy-Pettis integrals, and discuss their properties. In \cite{Ye} authors  discussed  the relationship between Henstock-Dunford and Henstock-Pettis integral. The de la Vallee-Poussin theorem (VPT) is use in \cite{Ricardo} to localization of uniformly integrable subset of scalar integrable function( Dunford integral)  with respect to a vector measure `$m$' in a suitable Orlicz space. Also one can see \cite{Alex} for de la Vallee-Poussin theorem and Orlicz spaces. In \cite{D} author 
   characterized the countable additivity of the Dunford integral of vector functions and also they characterize those strongly measurable vector function that are Pettis integrable through the compactness of certain set of scalar function in a certain Orlicz space. The Dunford , Pettis and Gel’fand integrals are generalizations of the Lebesgue
integral to Banach-valued functions. On the other hand, the Henstock integral of
a scalar function is a kind of nonabsolute integral which generalizes the Lebesgue
integral. Therefore, replacing the role of the Lebesgue integral by the Henstock
integral in those integrals, we obtain the Henstock-Dunford , Henstock-Pettis
and Henstock-Gel’fand integrals (which are extensions of the Dunford, Pettis
and Gel’fand integrals, respectively). \\
This paper is an attempt to characterize
the countable additivity of the Henstock-Dunford integral with the help of VPT (\cite{M}, Theorem 2. p3) and Dunford Pettis theorem, we find countable additivity of Henstock-Dunford integral of Henstock integrable function taking values in $\mathcal{X}$ as those weakly measurable function $ g:[a,b] \to \mathcal{X} $ for which $ \{ y^* g : y^* \in B_{\mathcal{X}}^{*} \} $ is relatively weakly compact in some separable Orlicz space $ H^\phi(\mu) $
     Also we execute necessary condition of Henstock-Gel'fand in terms of relatively weakly in $ H^\phi(\mu) $ Lastly we find $ \{ y^* g : y^* \in B_{\mathcal{X}}^{*} \} $ relatively weakly compact in some separable Orlicz space $ H^\phi(\mu) $ of weakly Henstock integrable function.  In \cite{BH} mention $H(I)$ is equivalent $L^1(.)$ with this consideration where $H(I) $ is Henstock integrable function space and $L^1(.)$ is classical lebesgue space. \\
   The intervals $ I $ and $J$ are non overlapping if int$(I) \cap $int$(J) = \phi ,$ where int$(I),$ int$(J)$ is interior of $I$ and $J,$ respectively.\\
   Now we recall some definitions and notions used in \cite{Kao}.\\
      Let $ P$ be a partition of the interval $[a,b]$ with $ P=a=y_0 < y_1 < y_2...<y_n=b.$ A tagged partition $ (P,(v_k)_{k=1}^{n} $ is a partition which has selected points $a_k$ in each subinterval $[y_{k-1}, y_k].$ The Riemann sum using the tagged partition can be written $$ \mathcal{R}(g,P)= \sum_{k=1}^{n}g(v_k)[y_k, y_{k-1}] .$$
    Let $ \delta > 0 .$ A partition $P$ is $\delta$-fine if every sub interval $ [y_{k-1}, y_k] $ satisfies $ y_k - y_{k-1} < \delta .$\\
    A function $ \delta: [a,b] \to \mathbb{R} $ is called a gauge on $[a,b]$ if $\delta(y) > 0 $ for all $ y \in [a,b ] .$ \\
     For example of $\delta(y)$-fine tagged $P$ partition. 
          Consider the interval $[0,1]$ and $\delta_1(y)=\frac{1}{8}, $ we will find a $\delta_1(y) $ fine tagged partition on $[0,1] .$\\
     For the choice of tag, $ \delta_1(v_k)=\frac{1}{8} $ any tagged partition $(P, (v_k)_{k=1}^{n})$ in which $ y_k - y_{k-1} < \frac{1}{8} $ is a $\delta_1(y) $ fine tagged partition.\\
     Consider the following partition, choosing each tag from every interval to be any number in that interval:\\
     $m([0, \frac{1}{9}]) < \frac{1}{8},~m([\frac{1}{9}, \frac{2}{9}]) < \frac{1}{8},~m([\frac{2}{9}, \frac{3}{9}]) < \frac{1}{8},~m([\frac{3}{9}, \frac{4}{9}]) < \frac{1}{8},~m([\frac{4}{9}, \frac{5}{9}]) < \frac{1}{8}, ~m([\frac{5}{9}, \frac{6}{9}]) < \frac{1}{8}, ~m([\frac{6}{9}, \frac{7}{9}]) < \frac{1}{8},~m([\frac{7}{9}, \frac{8}{9}]) < \frac{1}{8},~m([\frac{8}{9}, \frac{9}{9}]) < \frac{1}{8} $ is an example of a $ \delta_1(y) $ fine tagged.     
   \begin{defn}\cite{Kao}
   A function $ g:[a,b] \to \mathbb{R} $ is Henstock integral if there exists $ A \in \mathbb{R} $ such that for $ \epsilon > 0 $ there exist a gauge $\delta:[a,b] \to \mathbb{R} $ such that for each tagged partition $(P, (v_k)_{k=1}^{n})$ that is $\delta(y)$ fine, $$|\mathcal{R}(g,P)-A| < \epsilon $$ Or A function $ g:[a,b] \to \mathbb{R} $ is Henstock integrable if there exists a function $ G:[a,b] \to \mathbb{R} $ such that for every $ \epsilon > 0 $ there is a function $ \delta(t) > 0 $ such that for any $ \delta-$fine partition $ D=\{[u,z], t \} $ of $[a,b], $ we have $$|| \sum[g(t)(z-u)-G(u,z)]|| < \epsilon $$ where the sum $ \sum $ is understood to be over $ D= \{ ([u,z], t) \}$ and $G(u,z)= G(z)-G(u).$     We write $ \mathcal{H}\int\limits_{\mathcal{I}_0} g=G(\mathcal{I}_0).$
   \end{defn}
 \begin{defn}
 \begin{enumerate}
 \item[(a)] \cite{JD} A function $ g:[a,b] \to \mathcal{X} $ is said to be Dunford integrable on $ [a,b] $ if for each $ y^* \in \mathcal{X}^*, $ the function $ y^*g$ is Lebesgue integrable. In this case, as a consequence of the closed graph theorem, for every measurable subset $ A $ of $[a,b],$ there exists a vector $ y_{A}^{**}$ in $ \mathcal{X}^{**}$ such that $$ < y^* , y_{A}^{**} > = \int\limits_A y^* g \mbox{~for~all~}y^* \in \mathcal{X}^* .$$  A vector $ {y_A}^{**}$ is called the Dunford integral of $ g $ on $ A$ and is denoted by $ \mathcal{D}\int\limits_A g.$
 \item[(b)] \cite{JD} A function $ g:[a,b] \to \mathcal{X} $ is said to be Pettis integrable on $ [a,b] $ if it is Dunford integrable on $[a,b]$ and $ y_{A}^{**} \in \mathcal{X} $ for every measurable subset $ A $ of $[a,b]$\\
   The Henstock integral to Banach valued functions, is exactly in the same way as the Dunford and Pettis are extensions of the Lebesgue integral.  we define Henstock-Gel'fand integral as the style of R.A. Gordon \cite{Fong} of his Denjoy-Dunford and Denjoy-Pettis integral. Also we refer \cite{N.Dunford,Geo,Morrison,S} for the reader related to this areas. 
 \end{enumerate}
 \end{defn}
 
 \begin{defn}
 \begin{enumerate}
 \item[(a)] \cite{Ye} A function $ g:[a,b] \to \mathcal{X} $ is said to be Henstock-Dunford integrable on $[a,b]$ if for each $ y^* $ in $ \mathcal{X}^* ,$ the function $ y^* g $ is Henstock integrable on $[a,b]$ and if for every interval $ \mathcal{I} $ in $[a,b], $ there exists a vector $ y_\mathcal{I}^{**} $ in $ \mathcal{X}^{**} $ such that $$  y_\mathcal{I}^{**}(y^*) = \int\limits_{\mathcal{I}} y^* g \mbox{~for~all~}  y^* \in \mathcal{X}^* .$$ We write $ y_{\mathcal{I}_0}^{**} = \mathcal{HD} \int\limits_{\mathcal{I}_0} g = G(\mathcal{I}_0).$
  \item[(b)]\cite{Ye}  A function $ g:[a,b] \to \mathcal{X} $ is said to be Henstock-Pettis integrable on $[a,b]$ if $g $ is Henstock-Dunford integrable on $[a,b]$ and if $ y_{\mathcal{I}}^{**} $ in $ \mathcal{X} $ for every interval $\mathcal{ I} $ in $[a,b].$ We write $$ y_{\mathcal{I}_0}^{**} = \mathcal{HP} \int\limits_{\mathcal{I}_0} g = G(\mathcal{I}_0).$$
 \end{enumerate}
 \end{defn}
 \begin{defn} 
 \begin{enumerate}
 \item[(a)] \cite{JD} A function $ g:[a,b] \to \mathcal{X}^* $ is said to be Gel'fand integrable on $[a,b]$ if for each $ y \in \mathcal{X}, yg$ is Lebesgue integrable. In this as consequence of the closed graph theorem, for every measurable subset $ A $ of $[a,b]$ there exist $ y_{A}^{*} $ in $ \mathcal{X}^* $ such that $$ < y_{A}^{*} , y > = \int\limits_A yg \mbox{~for~all~}  y^* \in \mathcal{X}^* .$$   The vector $ y_{A}^{*} $ is called the Gel'fand integral of $ g$ on $ A $ and is denoted by $ \mathcal{G}\int\limits_A g $
   \item[(b)] \cite{B. Bongiorno}  A function $ g:[a,b] \to \mathcal{X}^* $ is said to be Henstock-Gel'fand integrable on $[a,b]$ if for each $ y \in \mathcal{X}, yg$ is Henstock integrable on $[a,b] $ and for every interval $ \mathcal{I} $ in $[a,b] $ there exist a vector $y_{\mathcal{I}}^{*} \in \mathcal{X} ^* $ such that $ y^*(y) = \int\limits_{\mathcal{I}} yg.$ 
 \end{enumerate}
 \end{defn} 
  \begin{defn} \cite{Mohammed} A function $ g:[a,b] \to \mathcal{X} $ is said to be weakly Henstock integrable $(w\mathcal{H})$ on $[a,b]$ with weak integral $\overline{w},$ if there is a sequence of gauges $(\delta_n)$ on $[a,b]$ such that $$ \lim\limits_{n \to \infty }< y^* , \sigma(g, p_n)> = < y^* ,\overline{w}> \mbox{~for~all~}  y^* \in \mathcal{X}^* .$$  For every sequence $(p_n) $ of Henstock integrable partition of $[a,b]$ adapted to $(\delta_n)$ and $ \overline{w} = \left((w\mathcal{H})-\int\limits_{a}^{b}g\right).$
  \end{defn} 
  But for our work we prefer the definition of Weakly Henstock-integrable like  Y. Guoju,S. Schwabik define weakly Mcshane integral in \cite{SI}, so we state our definition
  \begin{defn}
  A function $ g:[a,b] \to \mathcal{X} $ is said to be weakly Henstock integrable $(w\mathcal{H})$ on $[a,b]$ if for every $x^* \in X^* $ the real function $x^*(f) $ is Henstock integrable on $[a,b]$ and for every interval $I \subset [a,b],$ there is a $x_I \in \mathcal{X}$ such that $\int_{I}x^*(f)=x^*(x_I),$ \\
   We write $(w\mathcal{H})\int_I f = X_I$
  \end{defn}
  \begin{defn}
  A function $g:[a,b] \to \mathcal{X} $ is said to be weakly Henstock -Dunford integrable $(w\mathcal{HD})$ on $[a,b]$ if for every $x^* \in X^* $ the real function $x^*(f) $ is  Weak Henstock integrable on $[a,b]$ and for every interval $I \subset [a,b],$ there is a $x_{I}^{*} \in \mathcal{X}^{**}$ such that $\int_{I}x^*(f)=x_{I}^{**}(x^*),~\forall x^* \in \mathcal{X}^{*}$ 
   
  \end{defn}
  
  \begin{defn} A weakly measurable function $ g:[a,b] \to \mathcal{X} $ is said to be determined by a weakly compact generated $(WCG)$ subspace of $ \mathcal{X}, $ such that $x^* g=0 $ a.e. for all $x^* \in \mathcal{X}^*$ with $x^*|D=0$ if there is a weakly compact generated subspace $ D $ of $\mathcal{ X} $ 
  \end{defn} 
  \begin{defn} \cite{M} Let $ m : \mathbb{R}^+ \to \mathbb{R} ^+ $ be non decreasing right continuous and non negative function satisfying $$ m(0)=0, \mbox{~and~}  \lim\limits_{t \to \infty } m(t) = \infty.$$ A function $ M:\mathbb{ R} \to \mathbb{R} $ is called an $N$-function if there is a function $'m' $  satisfying the above sense that $$ M(u) = \int_{0}^{|u|} m(t) dt.$$
   Evidently, $M$ is an $N$-function if it is continuous, convex, even satisfies $$ \lim\limits_{u \to \infty } \frac{M(u)}{u}= \infty  \mbox{~and~}  \lim\limits_{u \to 0} \frac{M(u)}{u}= 0 .$$
    For example $ \overline{\phi}_p(x)= x^p;~ p > 1.$\\
     Let us fix a positive finite measure $ \mu $ and let $\overline{\phi} $ be an $N$-function. The Orlicz space $ H^\phi(\mu) $ consists those ($ \mu$-a.e. equivalence classes) of functions $ g \in H^0(\mu)$ for which $$ {H_{\phi}} = \inf \{ a >0 : \int_{\mathcal{\Omega}} \phi( \frac{f}{a}) \in H(I) \} $$
    
     \end{defn} 
   
    \begin{defn} \cite{M}
    \begin{enumerate}
    \item[(a)] An $N$-function $\overline{\phi} $ is said to satisfy $\Delta^{'} $ condition if there is a $ k > 0 $ so that $$ \overline{\phi}(xy) \leq k\overline{\phi}(x)\overline{\phi}(y) \mbox{~for~large~values~of~}x  \mbox{~and~} y.$$
    
        \item[(b)] 
          An $N$-function $\overline{\phi} $ is said to satisfy $\Delta_2 $ condition if there is a $ k > 0 $ so that $$ \overline{\phi}(2x) \leq k \overline{\phi}(x)  \mbox{~for~large~values~of~}  x.$$ 
          \end{enumerate} 
        \end{defn}  
  We recall the following results:
  \begin{thm}\label{thm12}
    A subset $ A $ of $ H^1(\mu) $ is uniformly integrable if and only if there is an $N$-function $ \phi $ with $ \Delta^{'} $ condition such that $ A $ is relatively weakly compact in $H^\phi(\mu) $ 
  \end{thm}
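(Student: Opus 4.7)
\textbf{Plan of proof for Theorem~\ref{thm12}.} My overall strategy is to reduce the statement to its classical Orlicz-space analogue, exploiting the equivalence $H^\phi(\mu)\cong L^\phi(\mu)$ recalled from~\cite{BH} (and, for $\phi(t)=t$, the equivalence $H^{1}(\mu)\cong L^{1}(\mu)$). Once the statement is transported into classical Orlicz spaces, the two implications become instances of the de la Vall\'ee-Poussin theorem (\cite{M}, Thm.~2, p.~3) and of the Dunford-Pettis theorem, respectively, and the remaining work concerns the existence of an $N$-function that simultaneously dominates the VPT majorant and satisfies the multiplicative condition $\Delta'$.

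For the direction $(\Rightarrow)$, I would assume $A\subset H^{1}(\mu)$ is uniformly integrable. Applying VPT produces an $N$-function $\psi$ with
$$\sup_{f\in A}\int_{\Omega}\psi(|f|)\,d\mu<\infty.$$
In general this $\psi$ need not satisfy $\Delta'$, so the next step is to replace it by an $N$-function $\phi\ge\psi$ that does; a standard construction is to take $\phi$ as a convex, moderately growing majorant of $\psi$ built by gluing together multiplicative pieces, keeping the modular integral finite on $A$. The resulting bound on the Orlicz modular gives boundedness of $A$ in the norm of $H^\phi(\mu)$. Because $\Delta'$ forces $\Delta_{2}$ (take $y=2$), the space $H^\phi(\mu)$ is separable and its norm bounded uniformly integrable subsets are relatively weakly compact, by the Orlicz-space Dunford-Pettis criterion transferred through the $H^\phi(\mu)\cong L^\phi(\mu)$ isomorphism of~\cite{BH}; this yields the desired relative weak compactness of $A$.

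For $(\Leftarrow)$ the argument is much softer. If $A$ is relatively weakly compact in $H^\phi(\mu)$ for some $N$-function $\phi$ satisfying $\Delta'$, then the continuous embedding $H^\phi(\mu)\hookrightarrow H^{1}(\mu)$ (a H\"older-Young inequality, available because $\mu$ is finite and $\phi$ is an $N$-function) is weak-to-weak continuous, so $A$ is relatively weakly compact in $H^{1}(\mu)$. The classical Dunford-Pettis theorem, applied under the $H^{1}(\mu)\cong L^{1}(\mu)$ identification, then gives that $A$ is uniformly integrable.

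The main obstacle I anticipate is the construction used in the forward direction: manufacturing from the $\psi$ returned by VPT a strictly larger $N$-function $\phi$ that keeps $\sup_{f\in A}\int\phi(|f|)\,d\mu$ finite while additionally enforcing $\overline{\phi}(xy)\le k\overline{\phi}(x)\overline{\phi}(y)$ for large $x,y$. I expect this to be handled by a truncation-and-gluing argument together with a careful $\Delta_2$-type growth estimate on the derivative of $\phi$; once such a lemma is in hand, the remainder of the proof is a routine transfer along the Orlicz equivalence of~\cite{BH} and a direct appeal to the Dunford-Pettis theorem.
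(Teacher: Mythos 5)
Your overall route --- transporting the statement to the classical Lebesgue/Orlicz setting via the equivalence of $H^\phi(\mu)$ with $L^\phi(\mu)$ from \cite{BH} and then invoking the de la Vall\'ee-Poussin and Dunford--Pettis theorems --- is exactly what the paper does: its proof of Theorem \ref{thm12} is a single sentence deferring to the classical statement for $\mathcal{L}^1(\mu)$ and $\mathcal{L}^{\overline{\phi}}(\mu)$ (the theorem of Alexopoulos \cite{Alex}, quoted via \cite{Musial}). Your backward implication is correct and is the standard soft argument.

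The forward direction as you sketch it, however, contains a genuine error: the $\Delta'$ function cannot be obtained as a majorant $\phi\ge\psi$ of the de la Vall\'ee-Poussin function $\psi$. If $\phi\ge\psi$ then $\int\phi(|f|)\,d\mu\ge\int\psi(|f|)\,d\mu$, and nothing guarantees that the larger modular stays finite on $A$; VPT only gives $\sup_{f\in A}\int\psi(|f|)\,d\mu<\infty$, i.e.\ the family $\{\psi(|f|):f\in A\}$ is merely bounded in $L^1$, not uniformly integrable, so one cannot run a second VPT on top of $\psi$ to absorb a faster-growing $\phi$. The classical construction goes in the opposite direction: one builds an $N$-function $\phi$ satisfying $\Delta'$ that grows strictly \emph{more slowly} than $\psi$, precisely so that $\psi\circ\phi^{-1}$ is again an $N$-function; then $\sup_{f\in A}\int(\psi\circ\phi^{-1})\bigl(\phi(|f|)\bigr)\,d\mu<\infty$ shows, by the easy half of VPT, that $\{\phi(|f|):f\in A\}$ is uniformly integrable, and relative weak compactness of $A$ in $L^\phi(\mu)$ follows from the criterion (valid for $\phi\in\Delta_2$) that a subset $K$ of $L^\phi$ is relatively weakly compact iff $\{\phi(|f|):f\in K\}$ is uniformly integrable. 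Relatedly, your intermediate claim that norm-boundedness in $H^\phi(\mu)$ together with uniform integrability in $H^1(\mu)$ already yields relative weak compactness in $H^\phi(\mu)$ is not the correct criterion; it is the uniform integrability of the composed family $\{\phi(|f|)\}$ that is needed. With the construction reversed in this way, your outline coincides with the argument of \cite{Alex} on which the paper implicitly relies.
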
 
  \begin{proof}
  Proof is similar as mention in \cite{Musial} for A subset $ A $ of $ \mathcal{L}^1(\mu) $ is uniformly integrable if and only if there is an $N$-function $ \phi $ with $ \Delta^{'} $ condition such that $ A $ is relatively weakly compact in $ \mathcal{L}^{\overline{\phi}}(\mu).$
  \end{proof}
     \begin{thm}\label{thm15}
     \cite{Ye}  A function  $ g:[a,b] \to \mathcal{X} $ is Henstock-Dunford integrable on $[a,b]$ if and only if $ y^* g $ is Henstock integrable on $[a,b]$ for all $ y^* \in \mathcal{X}^*.$
     \end{thm}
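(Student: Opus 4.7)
The forward implication is immediate from the definition: if $g$ is Henstock--Dunford integrable on $[a,b]$, then the existence of the representing vectors $y_{\mathcal{I}}^{**}\in \mathcal{X}^{**}$ already forces each scalar function $y^* g$ to be Henstock integrable, the scalar integral being precisely $y_{\mathcal{I}}^{**}(y^*)$. So all the content of the theorem is in the converse, and the plan is to construct the vectors $y_{\mathcal{I}}^{**}$ by a closed-graph argument, in the same spirit as the classical proof that scalar Lebesgue integrability of $y^*g$ forces Dunford integrability.

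Assume $y^*g$ is Henstock integrable on $[a,b]$ for every $y^*\in \mathcal{X}^{*}$. Fix a subinterval $\mathcal{I}\subset [a,b]$ and define the linear functional $\Phi_{\mathcal{I}}\colon \mathcal{X}^{*}\to \mathbb{R}$ by $\Phi_{\mathcal{I}}(y^*)=\int_{\mathcal{I}} y^*g$. To realize $\Phi_{\mathcal{I}}$ as an element of $\mathcal{X}^{**}$ it suffices to show boundedness, and for this I introduce the operator
\[
T\colon \mathcal{X}^{*}\longrightarrow C_{0}[a,b],\qquad T(y^*)(x)=\int_{a}^{x}y^*g,
\]
where $C_{0}[a,b]$ is the Banach space of continuous real-valued functions on $[a,b]$ vanishing at $a$, with the sup norm. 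Since each primitive of a Henstock integrable function is continuous, $T$ is well defined and linear. Boundedness of $T$ would give, for $\mathcal{I}=[c,d]$, the estimate $|\Phi_{\mathcal{I}}(y^*)|\le 2\|T\|\,\|y^*\|$, so $\Phi_{\mathcal{I}}$ lies in $\mathcal{X}^{**}$ and can be taken as $y_{\mathcal{I}}^{**}$.

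To establish boundedness I invoke the closed graph theorem. Suppose $y_{n}^{*}\to y^{*}$ in norm in $\mathcal{X}^{*}$ and $T(y_{n}^{*})\to F$ uniformly in $C_{0}[a,b]$; the task is to show $F(x)=\int_{a}^{x}y^{*}g$ for every $x$. Pointwise convergence $y_{n}^{*}(g(t))\to y^{*}(g(t))$ holds for every $t\in[a,b]$, while the indefinite Henstock integrals $F_{n}(x)=\int_{a}^{x}y_{n}^{*}g$ converge uniformly to $F$. Since $y^{*}g$ is Henstock integrable by hypothesis, its primitive $F_{y^{*}}$ exists, and a controlled convergence theorem for the Henstock integral (pointwise convergence of integrands together with the uniformity provided by uniform convergence of the primitives) lets us identify $F$ with $F_{y^{*}}$.

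The main obstacle is precisely this last step: unlike the Dunford case, where convergence in $L^{1}$ lets one pass to a subsequence converging a.e.\ and match up with the pointwise limit of $y_{n}^{*}(g(\cdot))$, here convergence is only in the sup-norm of the primitives, which is strictly weaker than any integrability-norm. Bridging this gap requires a genuine controlled or equi-$\mathrm{ACG}^{*}$ convergence theorem for Henstock integrals (essentially a uniform Saks--Henstock lemma applied along the sequence $y_{n}^{*}g$); once this is in hand, the closed graph theorem finishes the proof, and the construction of $y_{\mathcal{I}}^{**}$ follows uniformly in $\mathcal{I}$.
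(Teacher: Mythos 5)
The forward implication is fine, but your converse has a genuine gap at its central step, and the patch you propose is not available. The problem is the closedness of the graph of $T\colon \mathcal{X}^{*}\to C_{0}[a,b]$. From $y_{n}^{*}\to y^{*}$ in norm you only get $y_{n}^{*}g\to y^{*}g$ pointwise with $|y_{n}^{*}g(t)-y^{*}g(t)|\le \|y_{n}^{*}-y^{*}\|\,\|g(t)\|$, and $\|g(\cdot)\|$ need not be Henstock integrable, so there is no domination. Pointwise convergence of integrands together with uniform convergence of Henstock primitives does \emph{not} identify the limit primitive: one can build staircase primitives $F_{n}\to x$ uniformly whose derivatives $h_{n}$ tend to $0$ pointwise (after a harmless modification on a null set), while the primitive of the limit function is $0$, not $x$. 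So the implication you need is not a general convergence principle; it would have to exploit the special form $y_{n}^{*}g$, and the only structure there is the norm estimate. Writing $y_{n}^{*}-y^{*}=\varepsilon_{n}z_{n}^{*}$ with $\|z_{n}^{*}\|=1$ shows that controlling $\int_{a}^{x}(y_{n}^{*}-y^{*})g$ amounts to the uniform bound $\sup_{\|z^{*}\|\le 1}\sup_{x}|\int_{a}^{x}z^{*}g|<\infty$, which is exactly the boundedness of $T$ you are trying to prove. Hence appealing to a ``controlled'' or equi-$\mathrm{ACG}^{*}$ convergence theorem is circular: its hypotheses are the conclusion. (This is also why the natural target, the space of Henstock integrable functions under the Alexiewicz norm, cannot be used directly — it is not complete.)

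For comparison: the paper gives no proof of Theorem \ref{thm15} at all (it is quoted from \cite{Ye}), but it does prove the parallel statement for the Henstock--Gel'fand integral (Theorem \ref{thm21}) by a different route, which is the one you should follow: reduce the Henstock integral to the Denjoy integral via their equivalence and invoke the Denjoy--Dunford type characterization of \cite{Gamez}, for which the paper assembles the needed machinery in Propositions \ref{prop21}--\ref{prop24}. That argument replaces your single closed-graph application by a totalization: a Baire-category argument produces, inside every perfect set, a portion on which $g$ is genuinely Dunford (Lebesgue-type) integrable — and \emph{there} the closed graph theorem into $L^{1}(\mu)$ does work, because $L^{1}$-convergence yields a.e.\ convergent subsequences that can be matched with the everywhere-pointwise limit — after which Romanovski's lemma, the Cauchy-extension proposition, and the absolute-convergence condition on the contiguous intervals glue the portions together over all of $[a,b]$.
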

     \begin{lemma}\label{lemma1}
    \cite{TG} 
         A function $ g:[a,b] \to R $ is Henstock integrable on $[a,b]$ equivalent to  Denjoy integrable on $[a,b].$
  \end{lemma}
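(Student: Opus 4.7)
The target is the equivalence of Henstock(-Kurzweil) integrability with Denjoy integrability (in the restricted sense) for real-valued $g:[a,b]\to\mathbb{R}$. The strategy I would follow is the classical route through the class $\mathrm{ACG}_{*}$ (generalized absolute continuity in the restricted sense), since both integrals admit a descriptive characterization via $\mathrm{ACG}_{*}$ primitives whose derivative equals the integrand almost everywhere.

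First I would handle the direction (Henstock $\Rightarrow$ Denjoy). Given $g$ Henstock integrable on $[a,b]$, set $G(x)=\mathcal{H}\int_{a}^{x}g$. The Saks--Henstock lemma applied to $g$ on any closed set $E\subset[a,b]$ shows that $G$ is $\mathrm{AC}_{*}$ on each set $E_{n}=\{x:|g(x)|\le n\}$; since $[a,b]=\bigcup_{n} E_{n}$, $G$ is $\mathrm{ACG}_{*}$ on $[a,b]$. The standard differentiation theorem for Henstock primitives then gives $G'=g$ a.e. This is precisely the descriptive definition of the Denjoy integral of $g$ with primitive $G$, so $g$ is Denjoy integrable and the two integrals agree.

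Conversely, for (Denjoy $\Rightarrow$ Henstock), let $F$ be the Denjoy primitive of $g$, so $F$ is $\mathrm{ACG}_{*}$ on $[a,b]$ and $F'=g$ a.e. I would decompose $[a,b]=N\cup\bigcup_{n} A_{n}$, where $|N|=0$ (and $F'=g$ on $[a,b]\setminus N$) and each $A_{n}$ is closed with $F$ being $\mathrm{AC}_{*}$ on $A_{n}$. Given $\varepsilon>0$, the gauge $\delta$ is built piecewise: on each $A_{n}$ one uses the $\mathrm{AC}_{*}$ modulus together with pointwise differentiability of $F$ (so that $|F(v)-F(u)-g(t)(v-u)|$ is small for $\delta$-fine tagged intervals with tag $t\in A_{n}$), while on the null set $N$ one exploits the standard Henstock trick of covering $N$ by open intervals of small total length weighted against $|g|$ bounds, giving a contribution controlled by $2^{-n}\varepsilon$. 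Telescoping $F$ over a $\delta$-fine tagged partition and summing the pieces yields $|\mathcal{R}(g,P)-(F(b)-F(a))|<\varepsilon$.

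The main obstacle is the careful construction of the gauge on the exceptional null set $N$ in the second direction: one must arrange that tags landing on $N$ contribute negligibly even though $|g|$ may be unbounded there. The trick is to partition $N$ into the layers $N_{k}=\{x\in N: k-1\le|g(x)|<k\}$, enclose each $N_{k}$ in an open set of measure less than $\varepsilon/(k\cdot 2^{k})$, and let $\delta$ at a point $x\in N_{k}$ force the enclosing interval to lie inside that open set. Combined with the $\mathrm{AC}_{*}$ control on each $A_{n}$ and a standard $\varepsilon/2^{n}$ splitting over $n$, this yields the required Henstock estimate and completes the equivalence via Lemma~\ref{lemma1}.
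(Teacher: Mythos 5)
The paper does not actually prove this lemma: it is quoted from \cite{TG}, and the standard source is Gordon \cite{R.A}, where it appears as the Hake--Alexandroff--Looman-type equivalence of the Henstock and \emph{restricted} Denjoy integrals. Your outline follows the classical descriptive route through $\mathrm{ACG}_*$ primitives, which is the right approach, and you are right to read ``Denjoy'' in the restricted sense: the paper's own definition of the Denjoy integral (an $\mathrm{ACG}$ primitive with $F'_{ap}=g$ a.e.) is the \emph{wide} Denjoy integral, for which the stated equivalence is false, so your restriction is necessary, not pedantic.

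That said, both directions as sketched have genuine gaps. In (Henstock $\Rightarrow$ Denjoy), the decomposition $[a,b]=\bigcup_n E_n$ with $E_n=\{x:|g(x)|\le n\}$ does not support the Saks--Henstock argument: to prove $G$ is $\mathrm{AC}_*$ on a set you must be able to complete a finite collection of intervals anchored in that set to a $\delta$-fine tagged partition, which needs a uniform lower bound on the gauge there; the standard proof uses sets of the form $E_{n,k}=\{x:|g(x)|\le n,\ \delta(x)>1/k\}$, further cut into pieces of diameter less than $1/k$. In (Denjoy $\Rightarrow$ Henstock), your layering $N_k$ controls only the Riemann-sum term $|g(t)|(v-u)$ for tags $t\in N$; the companion term $|F(v)-F(u)|$ for those same intervals is left uncontrolled, and this is precisely the hard core of the theorem --- it requires combining the $\mathrm{AC}_*$ modulus of $F$ on each closed piece $A_n$ with the nullity of $N\cap A_n$, together with an oscillation estimate over the intervals contiguous to $A_n$ when the endpoints $u,v$ fall outside $A_n$. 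Finally, your closing appeal to Lemma~\ref{lemma1} is circular, since that is the statement under proof (you presumably meant the Saks--Henstock lemma). As written the proposal is a correct road map rather than a proof; citing the two descriptive characterizations in \cite{R.A} would close it honestly.
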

       \begin{lemma}\label{lemma1}
       \cite{Mohammed}   Suppose $\mathcal{X}$ contain no copy of $c_0$  and let $ g:[a,b] \to \mathcal{X} $ be $w\mathcal{H}$-integrable function on $[a,b],$ then it is Pettis integrable. 
       \end{lemma}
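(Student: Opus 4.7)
The plan is to lift the interval-wise primitive supplied by $w\mathcal{H}$-integrability into a norm countably additive $\mathcal{X}$-valued vector measure on the Borel $\sigma$-algebra of $[a,b]$, and then read off Pettis integrability. The hypothesis that $\mathcal{X}$ contains no copy of $c_0$ will be used precisely when passing from finite additivity to norm countable additivity, via the Bessaga-Pelczynski theorem.

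First I would set up the interval function. The definition of $w\mathcal{H}$-integrability gives, for every subinterval $I \subseteq [a,b]$, a vector $x_I \in \mathcal{X}$ with $x^*(x_I) = (\mathcal{H})\int_I x^*g$ for all $x^* \in \mathcal{X}^*$; this is well-defined because $x^*g$ is Henstock integrable on $[a,b]$ (and hence on every subinterval). The map $F \colon I \mapsto x_I$ is finitely additive on non-overlapping intervals since the scalar Henstock integral is finitely additive and every $x^* \in \mathcal{X}^*$ separates points.

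Next I would establish the countable additivity of $F$ through the no-$c_0$ assumption. Given a sequence of pairwise non-overlapping intervals $\{I_n\}$ with $\bigsqcup_n I_n \subseteq [a,b]$, I want $\sum_n F(I_n)$ to converge unconditionally in norm. By Bessaga-Pelczynski, since $\mathcal{X}$ contains no copy of $c_0$, it is enough to verify the series is weakly unconditionally Cauchy, i.e.\ $\sum_n |x^*(F(I_n))| = \sum_n \bigl| (\mathcal{H})\int_{I_n} x^*g \bigr| < \infty$ for every $x^* \in \mathcal{X}^*$. This is the step where the hypothesis really bites: for Henstock primitives the bounded-variation-over-partitions condition is not automatic, so I would argue by contradiction — if the scalar sum diverged for some $x^*$, I would use a gliding-hump/block-basis construction on $\{x_{I_n}\}$ to exhibit a sequence equivalent to the unit vector basis of $c_0$ inside $\mathcal{X}$, violating the hypothesis. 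The existence of $x_I$ in $\mathcal{X}$ (not merely in $\mathcal{X}^{**}$) supplied by the $w\mathcal{H}$ definition is crucial here, since it lets the block basis live in $\mathcal{X}$ itself. Once weak unconditional Cauchyness is secured, Bessaga-Pelczynski delivers norm unconditional convergence, so $F$ is norm countably additive on the ring of finite unions of intervals.

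Finally I would conclude Pettis integrability. By a Caratheodory-type extension for vector measures (justified by Orlicz-Pettis), $F$ extends to a countably additive measure $\nu$ on the Borel $\sigma$-algebra of $[a,b]$, with values in $\mathcal{X}$. For every $x^* \in \mathcal{X}^*$, the scalar measure $E \mapsto x^*\nu(E)$ is countably additive and hence of bounded variation; it coincides with the Henstock primitive of $x^*g$ on intervals, so $x^*g$ must be Lebesgue integrable with $x^*\nu(E) = \int_E x^*g\,d\mu$. Setting $x_E := \nu(E) \in \mathcal{X}$ for every measurable $E$ gives the Pettis integral of $g$. The main obstacle of the whole argument is the $c_0$-extraction in Step 2: ruling out an embedded $c_0$ when the scalar series $\sum |x^*(x_{I_n})|$ diverges requires a delicate combinatorial choice of the intervals together with uniform control on the Riemann sums defining $x_I$, and it is the only place where the structural hypothesis on $\mathcal{X}$ is actually used; everything after this reduction (the Caratheodory extension and the Radon-Nikodym identification) is standard.
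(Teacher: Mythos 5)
The paper offers no proof of this lemma at all --- it is simply quoted from \cite{Mohammed} --- so there is no argument of the authors' to compare yours against; judged on its own terms, however, your proposal has a genuine gap at its central step. You invoke Bessaga--Pelczynski to pass from weak unconditional Cauchyness of $\sum_n F(I_n)$ to unconditional norm convergence, but you try to \emph{derive} the weak unconditional Cauchyness itself from the absence of $c_0$, claiming that divergence of $\sum_n |x^*(F(I_n))|$ for some $x^*$ would let you extract a copy of $c_0$ inside $\mathcal{X}$. That implication is backwards: what Bessaga--Pelczynski produces is a $c_0$-copy from a series that \emph{is} weakly unconditionally Cauchy yet fails to converge; a series that is not weakly unconditionally Cauchy in the first place yields nothing, since that is a purely scalar condition which can fail in any Banach space. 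The scalar case already defeats the strategy: take $\mathcal{X}=\mathbb{R}$ (which contains no copy of $c_0$) and $g$ Henstock integrable but not Lebesgue integrable, e.g.\ the derivative of $t^2\sin(t^{-2})$. Such a $g$ is $w\mathcal{H}$-integrable in the sense of the paper's preferred definition (each $x_I=\int_I g\in\mathbb{R}$), yet for a suitable sequence of non-overlapping intervals one has $\sum_n \bigl|\int_{I_n} g\bigr|=\infty$, and $g$ is not Pettis (i.e.\ not Lebesgue) integrable. Your argument, were it valid, would prove that every Henstock integrable real function is Lebesgue integrable.

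The same example shows that the difficulty is not merely technical: with the interval-primitive definition of $w\mathcal{H}$-integrability adopted in this paper, the statement itself collapses in the scalar case unless ``Pettis integrable'' is reinterpreted. The theorem of Saadoune and Sayyad being cited is formulated for their gauge-sequence notion of weak Henstock integrability and its conclusion passes through weak McShane integrability; the step to Pettis integrability requires knowing that each $x^*g$ is Lebesgue, not merely Henstock, integrable. That absolute-integrability information is exactly what your Step 2 is trying to manufacture out of the no-$c_0$ hypothesis, and it cannot be had that way: the legitimate role of the no-$c_0$ assumption in arguments of this type begins only \emph{after} one knows $x^*g\in L^1(\mu)$ for every $x^*$, since then $\sum_n|x^*\nu(E_n)|\le\int|x^*g|\,d\mu<\infty$ gives weak unconditional Cauchyness for free and Bessaga--Pelczynski applies as you intend. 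Your final step (Carath\'eodory extension and identification of $x^*\nu$ with the Lebesgue integral of $x^*g$) is fine once countable additivity is secured, but the argument never legitimately gets there.
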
 
       \begin{lemma}\label{lemm3}
       Suppose $\mathcal{X}$ contain no copy of $c_0$  and let $ g:[a,b] \to \mathcal{X} $ be $w\mathcal{HD}$-integrable function on $[a,b],$ then it is Pettis integrable.
       \end{lemma}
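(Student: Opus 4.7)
My plan is to reduce Lemma \ref{lemm3} to the preceding Lemma \ref{lemma1} by promoting $w\mathcal{HD}$-integrability to $w\mathcal{H}$-integrability under the hypothesis that $\mathcal{X}$ contains no copy of $c_0$. Comparing the two definitions, the only gap between them is that the representing functional $x_I^{**}$ produced by $w\mathcal{HD}$-integrability lies \emph{a priori} in $\mathcal{X}^{**}$, whereas $w\mathcal{H}$-integrability demands it to lie in $\mathcal{X}$. So the whole task is: given $x_I^{**} \in \mathcal{X}^{**}$ with $x_I^{**}(x^*) = \int_I x^*g$ for every $x^* \in \mathcal{X}^*$ and every interval $I \subset [a,b]$, show that $x_I^{**} \in \mathcal{X}$.

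The strategy is to exhibit $I \mapsto x_I^{**}$ as a weak-$*$ countably additive interval function in $\mathcal{X}^{**}$ and then descend it to $\mathcal{X}$. Concretely, I would fix $I$ and write $I = \bigcup_n I_n$ as a union of pairwise non-overlapping subintervals. Countable additivity of the scalar Henstock integral applied to each $x^*g$ gives
\[ x_I^{**}(x^*) = \sum_{n=1}^{\infty} x_{I_n}^{**}(x^*) \qquad (x^* \in \mathcal{X}^*), \]
so the series $\sum_n x_{I_n}^{**}$ is weak-$*$ subseries convergent in $\mathcal{X}^{**}$. Combining this with the relative weak compactness of $\{x^*g : x^* \in B_{\mathcal{X}^*}\}$ in some separable Orlicz space $H^\phi(\mu)$ furnished by Theorem \ref{thm12} (which supplies the uniform integrability needed), I would upgrade the weak-$*$ convergence to weak subseries convergence in $\mathcal{X}^{**}$.

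The principal obstacle is the passage from weak subseries convergence in $\mathcal{X}^{**}$ to honest unconditional norm convergence in $\mathcal{X}$. Here the $c_0$-free hypothesis enters, via the Orlicz--Pettis theorem together with the Bessaga--Pe{\l}czy\'nski characterization: a Banach space $Y$ contains an isomorphic copy of $c_0$ if and only if $Y$ carries a weakly subseries convergent series that is not norm subseries convergent. Since $c_0 \not\hookrightarrow \mathcal{X}$, a local reflexivity / lifting argument (or, alternatively, a direct witness-functional argument in the spirit of Diestel--Uhl) realizes each partial sum in $\mathcal{X}$, and the completeness of $\mathcal{X}$ then forces the unconditional limit $x_I^{**}$ into $\mathcal{X}$. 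Once $x_I^{**} \in \mathcal{X}$ holds for every interval $I$, $g$ is $w\mathcal{H}$-integrable and Lemma \ref{lemma1} concludes that $g$ is Pettis integrable.
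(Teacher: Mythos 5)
Your overall strategy --- upgrade $w\mathcal{HD}$-integrability to $w\mathcal{H}$-integrability by pushing each representing functional $x_I^{**}$ from $\mathcal{X}^{**}$ down into $\mathcal{X}$ and then invoking Lemma \ref{lemma1} --- is the same reduction the paper intends, but where the paper simply asserts the descent, you attempt to prove it, and the attempt has two genuine holes. First, the claim that $\sum_n x_{I_n}^{**}$ is weak-$*$ \emph{subseries} convergent is false in general: the Henstock integral is non-absolute, so for a fixed $x^*$ the series $\sum_n \int_{I_n} x^* g$ need not converge absolutely (take $x^*g$ with primitive $x^2\sin(x^{-2})$ on $[0,1]$ and a decomposition refining the intervals on which the primitive increases), and a non-absolutely convergent scalar series always admits divergent subseries; note also that a subfamily of a decomposition of $I$ is not itself a decomposition of an interval, so countable additivity gives you no control over subseries. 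This is precisely why the Denjoy/Henstock version of this descent in \cite{Gamez} does not work with an arbitrary decomposition but instead runs a Baire-category argument over closed sets, using absolute convergence of $\sum_n \int_{\mathcal{I}_n} x^* g$ only over the intervals contiguous to a well-chosen portion (the paper's Proposition \ref{prop23} is exactly this tool); none of that machinery appears in your sketch. Relatedly, your appeal to Theorem \ref{thm12} presupposes uniform integrability of $\{x^*g : x^*\in B_{\mathcal{X}}^*\}$, which is not among the hypotheses and generally fails when some $x^*g$ is Henstock but not Lebesgue integrable.

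Second, even granting weak subseries convergence, the decisive step is missing: the partial sums $\sum_{k\le N} x_{I_k}^{**}$ live in $\mathcal{X}^{**}$, not in $\mathcal{X}$, so the Bessaga--Pe{\l}czy\'nski criterion applied to $\mathcal{X}$ does not bite, and $c_0\not\hookrightarrow\mathcal{X}$ does not preclude $c_0\hookrightarrow\mathcal{X}^{**}$. The phrase ``a local reflexivity / lifting argument \dots realizes each partial sum in $\mathcal{X}$'' names exactly the point that needs proof and does not supply it. For comparison, the paper's own (very terse) proof takes a blunter route: it cites Lemma \ref{lemma1} to claim $g$ is already $w\mathcal{H}$-integrable and measurable, hence --- it asserts --- Henstock, hence McShane, hence Pettis integrable. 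Your route is the classically correct one in spirit, but as written it is not yet a proof.
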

       \begin{proof}
       If $\mathcal{X} $ contain no copy of $C_0$ and $g:[a,b] \to \mathcal{X}$ is $w\mathcal{HD}$-integrable.  Then lemma\ref{lemma1} shows  $x^*g$ is $w\mathcal{H}$ on $[a,b]$ also $x^* g $ is measurable for all $x^* \in \mathcal{X}^{*}$\\
       That is $g $ Henstock-integrable function. so $g $ is Mc shane integrable and hence pettis integrable. 
       \end{proof}
       \begin{defn}
       \begin{enumerate}
       \item[(a)] A function $F$ is ACG on $\mathcal{E}$ if $F$ is continuous on $\mathcal{E}$ and if $E$ can be expressed as a countable union of sets on each of which $F$ is AC.
                     \item[(b)] A function  $ g:[a,b] \to \mathcal{X} $ is Denjoy integral on $[a,b]$ if there exist a ACG function $ F:[a,b] \to \mathcal{X} $ such that $ F_{ap}^{'} =g^{'} $ a.e. on $[a,b],$ where $ F_{ap}^{'}$ denotes the approximate derivatives of $F.$ In this case $$ \int\limits_{a}^{b} g = F(b) -F(a).$$ 
                     We say $g$ is Denjoy integrable on a subset $A$ of $[a,b]$ if $g\chi_A $ is Denjoy integrable on $[a,b]$ and write $$\int_A g =\int\limits_{a}^{b} g \chi_A$$
                     \item[(c)] A function $ g:[a,b] \to X^* $ is said to be Denjoy-Gel'fand integrable on $[a,b]$ if for each $ y \in \mathcal{X},$ $yg $ is Denjoy integrable on $[a,b]$ and for every interval $\mathcal{I} $ in $[a,b]$ there exists a vector $ y_{\mathcal{I}}^{*} $ is called Denjoy-Gel'fand integral of $g$ on $[a,b]$ and we denote $ \mathcal{DG}\int\limits_{a}^{b} g.$ 
       \end{enumerate}
       \end{defn}
       \begin{rem}
       (Theorem 15.9 \cite{R.A}) A real Denjoy integrable function on $[a,b]$ is not necessarily integrable on all measurable subset of $[a,b].$  In fact if the function is absolutely integrable or equivalent to Lebesgue integral.
       \end{rem}
    
       \section{Main Results} 
       
       \begin{prop}\label{prop21}
        
       Assume $ g:[a,b] \to \mathcal{X} $ is Denjoy-Gel'fand on $[a,t]$ for all $ t \in [a,b) $ and for each $ y \in \mathcal{X}. $ The limit $\lim\limits_{t \to b}\int\limits_{a}^{b} yg $ exists, then $g$ is Denjoy-Gel'fand on $[a,b]$ and $$ < y , \mathcal{DG}\int\limits_{a}^{b} g > = \lim\limits_{t \to b } <y , \mathcal{DG}\int\limits_{a}^{t} g > $$ for each $ y \in \mathcal{X}.$
       \end{prop}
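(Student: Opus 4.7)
The plan is to build the desired functional $y^{*}_{[a,b]}\in\mathcal{X}^{*}$ as a limit of the functionals $y^{*}_{t}:=\mathcal{DG}\int_{a}^{t}g$ that the hypothesis supplies on each $[a,t]$, and then extend the argument to every sub-interval of $[a,b]$.

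First, I would fix $y\in\mathcal{X}$ and reduce the scalar question to the classical Cauchy (Hake) extension theorem for the Denjoy integral. By hypothesis $yg$ is Denjoy integrable on every $[a,t]$ with $t<b$, and $\lim_{t\to b}\int_{a}^{t}yg$ exists in $\mathbb{R}$; Hake's theorem — which holds for the Henstock integral and so, by Lemma~\ref{lemma1}, for the Denjoy integral — then gives that $yg$ is Denjoy integrable on $[a,b]$ with
\[
\int_{a}^{b}yg=\lim_{t\to b}\int_{a}^{t}yg.
\]

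Second, I would define $T\colon\mathcal{X}\to\mathbb{R}$ by $T(y):=\lim_{t\to b}y^{*}_{t}(y)=\int_{a}^{b}yg$. Linearity is immediate from linearity of the Denjoy integral. The delicate point is boundedness, and I would get it via Banach--Steinhaus applied to the family $\{y^{*}_{t}\}_{t\in[a,b)}\subset\mathcal{X}^{*}$. Fix $y\in\mathcal{X}$: since the limit $\lim_{t\to b}y^{*}_{t}(y)$ exists, the scalar map $t\mapsto y^{*}_{t}(y)$ is bounded on a one-sided neighbourhood of $b$; on any $[a,b-\varepsilon]$ it is the Denjoy primitive of $yg$ and so is continuous, hence bounded there as well. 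Thus $\sup_{t}\lvert y^{*}_{t}(y)\rvert<\infty$ for every $y$, and the uniform boundedness principle yields $\sup_{t}\|y^{*}_{t}\|<\infty$. Consequently $T$ is bounded, i.e.\ $T\in\mathcal{X}^{*}$, and we may set $y^{*}_{[a,b]}:=T$.

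Third, I would verify that this produces the required element for \emph{every} sub-interval $\mathcal{I}\subseteq[a,b]$. If $\mathcal{I}$ is contained in some $[a,t_0]$ with $t_0<b$, the hypothesis already supplies $y^{*}_{\mathcal{I}}$. The only new case is $\mathcal{I}=[c,b]$: then $\int_{c}^{t}yg=\int_{a}^{t}yg-\int_{a}^{c}yg$ still has a finite limit as $t\to b$, so repeating the Hake plus Banach--Steinhaus argument on $[c,b]$ produces $y^{*}_{[c,b]}\in\mathcal{X}^{*}$. Taking $\mathcal{I}=[a,b]$ gives the identity $\langle y,\mathcal{DG}\int_{a}^{b}g\rangle=\lim_{t\to b}\langle y,\mathcal{DG}\int_{a}^{t}g\rangle$ advertised in the statement.

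The main obstacle will be the pointwise boundedness step that feeds Banach--Steinhaus: one has to combine the one-sided behaviour near $b$ (given by the existence of the limit) with the interior behaviour on $[a,b-\varepsilon]$ (given by continuity of the Denjoy primitive) in order to rule out any blow-up of $|y^{*}_{t}(y)|$ as $t$ varies over the whole half-open interval $[a,b)$. Everything else is either standard scalar Denjoy theory or a routine closed-graph/duality argument.
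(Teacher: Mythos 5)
Your proof is correct and follows essentially the route the paper intends: the paper prints no argument of its own for this proposition, only a pointer to "the same technique" of a cited result, and that technique is precisely your combination of the scalar Cauchy--Hake extension for the Denjoy integral with a Banach--Steinhaus/uniform-boundedness step to realize the limit functional $y\mapsto\lim_{t\to b}\int_a^t yg$ as an element of $\mathcal{X}^*$ and to handle the subintervals $[c,b]$ by subtraction. (You also silently correct the statement's typos --- $\int_a^b$ should be $\int_a^t$ inside the limit, and $g$ should take values in $\mathcal{X}^*$ for the Gel'fand pairing to make sense --- which is the right reading.)
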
 
       For the proof we follow the same technique of Prop 1 of \cite{D} with minor add
     
  
  \begin{prop}\label{prop22}
  Let $ g:[a,b] \to \mathcal{X},$ if $yg $ is Denjoy integrable on $[a,b]$ for each $ y \in \mathcal{X},$ then each perfect set in $[a,b]$ contains a portion on which $g$ is Gel'fand integrable.
  \end{prop}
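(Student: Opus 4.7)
The plan is to adapt the Baire category and uniform-boundedness scheme used by Gordon~\cite{R.A} and Musial~\cite{D} for Denjoy-Dunford integrals. I read the hypothesis as $g:[a,b]\to\mathcal{X}^{*}$, so that $yg$ is a scalar function for each $y\in\mathcal{X}$ (this is what makes Gel'fand integrability of $g$ meaningful). For each $y\in\mathcal{X}$ the Denjoy primitive $F_y(t)=\mathcal{D}\int_a^t yg$ is continuous and ACG on $[a,b]$, so there is a countable closed decomposition $[a,b]=\bigcup_n E_n^{(y)}$ on which $F_y$ is absolutely continuous. Define $T:\mathcal{X}\to C[a,b]$ by $T(y)=F_y$; linearity is immediate. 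I would then prove boundedness of $T$ by the closed graph theorem: if $y_n\to y$ in $\mathcal{X}$ and $F_{y_n}\to H$ uniformly, I need $F_y=H$. The delicate issue is that $(y_n-y)g(s)\to 0$ only pointwise, and the Denjoy integral admits no direct dominated convergence; the workaround is to apply the controlled convergence theorem on each closed piece of the ACG decomposition of $F_y$, where the situation is locally Lebesgue.

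Once $T$ is bounded, fix a perfect set $P\subset [a,b]$, regarded as a Baire space in the induced metric. For each pair $(n,m)\in\mathbb{N}^{2}$ set
\[
P_{n,m}=\Bigl\{t\in P:\ \sup_{\|y\|\le 1}\bigl|F_y(s)-F_y(t)\bigr|\le m\ \text{for all}\ s\in P\cap \bigl[t-\tfrac{1}{n},\,t+\tfrac{1}{n}\bigr]\Bigr\}.
\]
Continuity of each $F_y$ together with boundedness of $T$ makes each $P_{n,m}$ closed in $P$, and $P=\bigcup_{n,m}P_{n,m}$ because at every $t\in P$ the ACG structure of the members of $\{F_y:\|y\|\le 1\}$, combined with the estimate $\|F_y\|_{\infty}\le\|T\|\,\|y\|$, bounds the relevant local oscillations. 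By the Baire category theorem some $P_{n_0,m_0}$ has non-empty interior in $P$, yielding a portion $P\cap [c,d]$ on which the family $\{F_y:\|y\|\le 1\}$ is equi-absolutely continuous.

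Equi-absolute continuity of $\{F_y\}_{\|y\|\le 1}$ on $P\cap [c,d]$ forces $yg\cdot\chi_{P\cap[c,d]}$ to be Lebesgue integrable for every $y\in\mathcal{X}$ and makes the map $y\mapsto\mathcal{L}\int_{P\cap[c,d]} yg$ a bounded linear functional on $\mathcal{X}$; Riesz representation supplies the vector $y^{*}_{P\cap[c,d]}\in\mathcal{X}^{*}$ representing the Gel'fand integral of $g$ on $P\cap [c,d]$. The main obstacle is the closed graph argument from the first paragraph: the absence of a dominated or Vitali convergence theorem for Denjoy integrals forces a descent to the ACG pieces and a reassembly. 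An alternative, if that becomes unwieldy, is to recast $T$ as a continuous operator into the Alexiewicz-normed space of Denjoy primitives and invoke openness of that inclusion to transfer boundedness back to $C[a,b]$.
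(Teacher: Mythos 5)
Your overall architecture (Baire category on the perfect set plus a uniform-boundedness ingredient for the family of primitives) is the right one --- it is essentially the scheme of Lemma~2/Theorem~3 of \cite{Gamez}, which is all the paper itself offers by way of proof --- and your correction of the statement to $g:[a,b]\to\mathcal{X}^*$ is right. But two steps, and they are the two load-bearing ones, do not go through as written. First, the closed-graph argument for $T:\mathcal{X}\to C[a,b]$. To close the graph you must show: $z_n\to 0$ in $\mathcal{X}$ and $F_{z_n}\to G$ uniformly imply $G=0$. You propose to get this from the controlled convergence theorem ``on each closed piece of the ACG decomposition of $F_y$,'' but the ACG decomposition of $F_{z_n}$ varies with $n$, and controlled convergence requires the sequence $\{F_{z_n}\}$ to be \emph{uniformly} ACG (or uniformly AC on a fixed closed set), which is exactly the kind of control you do not have; pointwise convergence $z_n g\to 0$ plus uniform convergence of the primitives is not enough for the Denjoy integral (this is tied to the non-completeness of the Alexiewicz norm). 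In the standard development the boundedness of $y\mapsto F_y$ --- equivalently, Denjoy--Gel'fand integrability on every subinterval --- is a \emph{consequence} of the portion lemma you are trying to prove, obtained afterwards via Romanovski's lemma (this is the route \cite{Gamez} takes, and it is why Propositions~\ref{prop22}--\ref{prop24} appear in that order in the paper); taking it as an input here puts the cart before the horse unless you can genuinely close the graph, which you have not done.

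Second, even granting boundedness of $T$, your sets $P_{n,m}$ cannot do the job asked of them. They bound only the single-interval oscillation $\sup_{\|y\|\le 1}|F_y(s)-F_y(t)|$, and once $\|F_y\|_\infty\le\|T\|\,\|y\|$ is available this supremum is $\le 2\|T\|$ for \emph{all} $s,t$, so $P_{n,m}=P$ whenever $m\ge 2\|T\|$ and the Baire argument selects nothing. A portion on which the family $\{F_y:\|y\|\le 1\}$ is equi-absolutely continuous requires sets that control $\sum_i|F_y(v_i)-F_y(u_i)|$ over finite collections of non-overlapping intervals of small total length near $t$, uniformly in $y\in B_{\mathcal{X}}$; showing that $P$ is the countable union of \emph{those} sets is the real content of the lemma, and it is nontrivial precisely because the closed sets on which each $F_y$ is AC depend on $y$. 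The standard way out is a condensation-of-singularities (gliding hump) argument: if no $n$ works at some $t\in P$, one builds a single $y=\sum_k 2^{-k}y_k$ whose primitive $F_y$ fails to be ACG near $t$, contradicting Denjoy integrability of $yg$. As it stands your proposal asserts the decomposition rather than proving it, so the proof has a genuine gap at its central step. (A small additional point: the final passage from a bounded linear functional $y\mapsto\mathcal{L}\int_{P\cap[c,d]}yg$ to an element of $\mathcal{X}^*$ is just the definition of the dual space, not the Riesz representation theorem.)
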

  For the proof of this proposition we use Theorem 33 of \cite{Gamez}, with little add 

 
  \begin{prop}\label{prop23}
  Let $ g:[a,b] \to \mathcal{X}^* $ be such that $ yg $ is Denjoy integrable on $[a,b]$ for all $ y \in \mathcal{X}.$    Let $P$ be a closed subset of $[a,b]$ and assume that $g$ is Denjoy-Gel'fand integrable on each open interval $J$ disjoint from $P,$ then there exists a portion $P_0 $ such that if $(\mathcal{I}_n)$ is an enumeration of the interval neighboring to $P_0$ then the series $ \sum\limits_{n} \int\limits_{\mathcal{I}_n} yg $ is absolutely convergent for every $ y \in \mathcal{X}.$
  \end{prop}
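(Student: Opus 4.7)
The plan is to prove this via a double Baire category argument: first on the closed set $P$ (for each individual $y$), and then on the Banach space $\mathcal{X}$ (to achieve uniformity in $y$). Because every contiguous interval $\mathcal{I}_n$ of a portion $P\cap J$ is, by construction, disjoint from $P$, the standing hypothesis produces Gel'fand integrals $x_n^{*}=\mathcal{DG}\!\int_{\mathcal{I}_n} g\in\mathcal{X}^{*}$ satisfying $\langle x_n^{*},y\rangle=\int_{\mathcal{I}_n} yg$. The target therefore reduces to finding a portion $P_0=P\cap J_0$ and a constant $C>0$ such that $\sum_n|\langle x_n^{*},y\rangle|\le C\|y\|$ for every $y\in\mathcal{X}$; this automatically yields absolute convergence of the series in the statement.

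First I would fix $y\in\mathcal{X}$ and examine the primitive $F_y(t)=\int_a^t yg$, which by hypothesis is ACG on $[a,b]$. Writing $P=\bigcup_k A_k^y$ with $F_y$ absolutely continuous on each $A_k^y$ (closed without loss of generality, since $F_y$ is continuous), the Baire theorem applied to the complete metric space $P$ supplies an open interval $J_y$ and an index $k(y)$ with $P\cap J_y\subset A_{k(y)}^y$. Consequently $F_y$ has bounded variation on the portion $P\cap J_y$, and choosing the points of the variation-supremum to be the endpoints of the components of $J_y\setminus(P\cap J_y)$ yields $\sum_n\bigl|\int_{\mathcal{I}_n^{(y)}} yg\bigr|\le\mathrm{Var}(F_y,P\cap J_y)<\infty$. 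Second, to decouple the portion from $y$, I enumerate the open intervals $\{J_m\}$ with rational endpoints meeting $P$ and for each pair $(m,k)$ set
$$
B_{m,k}=\Bigl\{y\in\mathcal{X}:\ \sum_n|\langle x_n^{(m)*},y\rangle|\le k\Bigr\},
$$
where $x_n^{(m)*}$ are the Denjoy-Gel'fand integrals on the components of $J_m\setminus(P\cap J_m)$. Step one gives $\mathcal{X}=\bigcup_{m,k}B_{m,k}$. Each $B_{m,k}$ is closed, symmetric, and convex, so Baire on the Banach space $\mathcal{X}$ forces some $B_{m_0,k_0}$ to have nonempty interior; the standard symmetric-convex argument then produces $\epsilon>0$ with $\epsilon B_\mathcal{X}\subset B_{m_0,k_0}$, and positive homogeneity upgrades this to $\sum_n|\langle x_n^{(m_0)*},y\rangle|\le(k_0/\epsilon)\|y\|$ for all $y$, with the desired $P_0=P\cap J_{m_0}$.

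The step I expect to be the main obstacle is establishing the closedness of each $B_{m,k}$ in $\mathcal{X}$. For a generic $t\in P$ the map $y\mapsto F_y(t)$ is only linear, not obviously norm-continuous, so one cannot argue directly that $y\mapsto\mathrm{Var}(F_y,P\cap J)$ is lower semicontinuous. The crux is to sidestep the full variation and work only with the contiguous-interval sums: because each $\mathcal{I}_n^{(m)}$ is disjoint from $P$, the Denjoy-Gel'fand hypothesis forces $x_n^{(m)*}\in\mathcal{X}^{*}$, hence the partial sums defining $B_{m,k}$ are norm-continuous in $y$ and $B_{m,k}$ is closed. The one-sided estimate $\sum_n|\int_{\mathcal{I}_n^{(y)}} yg|\le\mathrm{Var}(F_y,P\cap J_y)$ from step one holds pointwise in $y$ without any continuity requirement and already suffices to cover $\mathcal{X}$ by the $B_{m,k}$; this is precisely the role the Denjoy-Gel'fand assumption is designed to play.
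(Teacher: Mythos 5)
Your argument is correct, and it is essentially the argument the paper itself delegates to: the paper offers no details beyond the pointer to Lemma 1 of Gámez–Mendoza, and that lemma is exactly the double Baire category device you reconstruct — first on the complete metric space $P$, using that each primitive $F_y$ is ACG to produce a $y$-dependent portion with bounded variation, then on $\mathcal{X}$ over a countable family of rational portions to remove the dependence on $y$. You also correctly isolate the one point where the Denjoy--Gel'fand hypothesis on intervals disjoint from $P$ is actually used, namely that it makes each $y\mapsto\langle x_n^{(m)*},y\rangle$ a bona fide element of $\mathcal{X}^{*}$ so that the sets $B_{m,k}$ are closed, convex and symmetric and the category argument closes.
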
 
 Using lemma 1 of \cite{Gamez} for the proof
  
  \begin{prop}\label{prop24} The function 
  $ g:[a,b] \to \mathcal{X}^* $ is Denjoy-Gel'fand on $[a,b]$ if and only if $yg$ is Denjoy integrable on $[a,b]$ for all $ y^* \in \mathcal{X}^*.$
  \end{prop}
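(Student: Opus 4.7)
The forward implication is immediate from the definition: if $g$ is Denjoy--Gel'fand on $[a,b]$, then in particular for every $y\in\mathcal{X}$ the scalar function $yg$ is Denjoy integrable on $[a,b]$. The nontrivial direction is the converse, and I would prove it by a transfinite Cantor--Bendixson argument modeled on R.~A.~Gordon's treatment of the Denjoy--Dunford integral, feeding in Propositions \ref{prop21}, \ref{prop22} and \ref{prop23} exactly where they were designed to be used.

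Assume $yg$ is Denjoy integrable on $[a,b]$ for every $y\in\mathcal{X}$. Let $E$ be the set of all $x\in[a,b]$ such that there exists an open interval $U\ni x$ with $U\cap[a,b]$ carrying the Denjoy--Gel'fand integral of $g$. By construction $E$ is open in $[a,b]$, so $P:=[a,b]\setminus E$ is closed. The goal is to show $P=\emptyset$, which together with Proposition \ref{prop21} (used to handle the two endpoints $a,b$ as one--sided limits) will give Denjoy--Gel'fand integrability on the whole interval.

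Suppose for contradiction that $P\neq\emptyset$. Because $P$ is nonempty and closed, it contains a nonempty perfect subset (or is itself a single point, a case handled directly by Proposition \ref{prop21}). Apply Proposition \ref{prop22} to this perfect set to obtain a portion $P_1$ on which $g$ is Gel'fand integrable. Now $P_1$ is a closed subset of $[a,b]$ and, by the definition of $E$, on each open interval $\mathcal{I}_n$ neighboring $P_1$ the function $g$ is Denjoy--Gel'fand integrable. Proposition \ref{prop23} then yields a further portion $P_0\subseteq P_1$ such that the series $\sum_n\int_{\mathcal{I}_n}yg$ converges absolutely for every $y\in\mathcal{X}$. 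On $P_0$ itself $g$ remains Gel'fand integrable (portions inherit integrability), so for every subinterval $\mathcal{J}\subseteq[a,b]$ meeting only $P_0$ and finitely or countably many of the $\mathcal{I}_n$ we can define
\[
T_\mathcal{J}(y)\;=\;\int_{\mathcal{J}\cap P_0}yg\;+\;\sum_{n}\int_{\mathcal{J}\cap\mathcal{I}_n}yg.
\]
Linearity in $y$ is clear, and boundedness of $T_\mathcal{J}$ on $\mathcal{X}$ follows from the closed graph theorem applied to $y\mapsto T_\mathcal{J}(y)$ (each term is defined via a pointwise Denjoy/Gel'fand integral, and absolute convergence of the series together with the Gel'fand piece lets one pass to the limit). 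Hence $T_\mathcal{J}\in\mathcal{X}^*$, and this functional is precisely the Denjoy--Gel'fand integral of $g$ over $\mathcal{J}$. This shows that points of $P_0$ admit open neighborhoods on which $g$ is Denjoy--Gel'fand integrable, so $P_0\subseteq E$, contradicting $P_0\subseteq P=[a,b]\setminus E$.

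The main obstacle I anticipate is the bookkeeping in the last step: one must verify that the functional $T_\mathcal{J}$ produced by gluing the Gel'fand integral over $P_0$ with the convergent sum of Denjoy--Gel'fand integrals over the $\mathcal{I}_n$ really is weak*--continuous (i.e.\ belongs to $\mathcal{X}^*$ and not just to $\mathcal{X}^{**}$) and that its value is independent of the enumeration of the neighboring intervals. Absolute convergence from Proposition \ref{prop23} handles the second issue, and the closed graph / uniform boundedness argument indicated above handles the first; once $P=\emptyset$ is established, Proposition \ref{prop21} supplies the Denjoy--Gel'fand integral at the endpoints and completes the proof.
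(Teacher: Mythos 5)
Your overall architecture is the right one, and it is in fact the route the paper intends: the paper offers no argument of its own here beyond the citation of Theorem 3 of \cite{Gamez}, and the G\'amez--Mendoza proof is exactly the Cantor--Bendixson/Harnack-extension scheme you describe, with Propositions \ref{prop21}, \ref{prop22} and \ref{prop23} as the three inputs. The forward direction, the definition of the singular set $P$, the use of Proposition \ref{prop22} to extract a portion on which $g$ is Gel'fand integrable, Proposition \ref{prop23} for the absolute convergence of the contiguous series, and the Banach--Steinhaus argument showing the glued functional lands in $\mathcal{X}^*$ rather than $\mathcal{X}^{**}$ (the point at which the Gel'fand case is genuinely easier than the Pettis one, since dual spaces are weak$^*$-sequentially complete) are all sound.

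The gap is in the reduction to a perfect set. A nonempty closed set need not contain a nonempty perfect subset and need not be a singleton (consider $\{0\}\cup\{1/n:n\ge 1\}$), so your dichotomy does not exhaust the cases. Worse, even when the perfect kernel $Q$ of $P$ is nonempty, the intervals contiguous to a portion $P_1$ of $Q$ may still contain points of the scattered part $P\setminus Q$, at which $g$ is by definition not locally Denjoy--Gel'fand integrable; so the hypothesis of Proposition \ref{prop23} (that $g$ be Denjoy--Gel'fand integrable on every open interval disjoint from the closed set) is not verified, and the concluding contradiction $P_0\subseteq E$ is not established. The standard repair is to prove first that $P$ has no isolated points: if $x_0$ were isolated in $P$, then $g$ is Denjoy--Gel'fand integrable on every closed interval contained in a punctured neighborhood of $x_0$; since each $yg$ is Denjoy integrable on all of $[a,b]$, its primitive is continuous, so the one-sided limits required by Proposition \ref{prop21} exist and that proposition extends the integral across $x_0$, contradicting $x_0\in P$. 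Hence $P$ is perfect whenever it is nonempty, the intervals contiguous to a portion of $P$ are genuinely disjoint from $P$, and your argument then closes as written. Equivalently, one can invoke Romanovski's lemma, as G\'amez and Mendoza do, which packages this transfinite bookkeeping once and for all.
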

  For the proof we use  Theorem 3 of \cite{Gamez}  
 
     \begin{thm}\label{thm21}
     A function $ g:[a,b] \to \mathcal{X}^* $ is Henstock-Gel'fand on $ [a,b] $ if and only if $ yg $ is Henstock integrable on $ [a,b] $
     \end{thm}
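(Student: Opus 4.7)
The statement is a Henstock-Gel'fand analogue of Theorem \ref{thm15} (the Henstock-Dunford version) and essentially a ``Henstock-lifted'' reformulation of Proposition \ref{prop24}. The forward implication is free from the definition: if $g$ is Henstock-Gel'fand integrable on $[a,b]$, then by definition $yg$ is Henstock integrable on $[a,b]$ for every $y\in\mathcal{X}$. All the content is in the reverse implication, so I would spend my energy producing the required functional $y^{*}_{\mathcal{I}}\in\mathcal{X}^{*}$ on every subinterval.

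The plan for the reverse direction is to bridge between Henstock and Denjoy via Lemma \ref{lemma1} and then import the structural result Proposition \ref{prop24}. Concretely, assume $yg$ is Henstock integrable on $[a,b]$ for every $y\in\mathcal{X}$. By Lemma \ref{lemma1}, $yg$ is then Denjoy integrable on $[a,b]$ for every $y\in\mathcal{X}$. Proposition \ref{prop24} yields that $g$ is Denjoy-Gel'fand integrable on $[a,b]$: for each subinterval $\mathcal{I}\subseteq[a,b]$ there is a vector $y^{*}_{\mathcal{I}}\in\mathcal{X}^{*}$ with $y^{*}_{\mathcal{I}}(y)=\mathcal{D}\!\int_{\mathcal{I}} yg$ for every $y\in\mathcal{X}$. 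Applying Lemma \ref{lemma1} again in the scalar setting, $\mathcal{D}\!\int_{\mathcal{I}}yg=\mathcal{H}\!\int_{\mathcal{I}}yg$, so $y^{*}_{\mathcal{I}}(y)=\int_{\mathcal{I}}yg$ with the Henstock integral. This is exactly the definition of Henstock-Gel'fand integrability of $g$.

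The step I expect to require the most care is verifying that the functional $y\mapsto \int_{\mathcal{I}} yg$ really lies in $\mathcal{X}^{*}$, i.e.\ that it is linear and bounded. Linearity is straightforward from the linearity of the scalar Henstock integral. For boundedness, the standard trick is a closed graph argument: consider the linear map $T_{\mathcal{I}}\colon\mathcal{X}\to L^{0}$ sending $y$ to the equivalence class of $yg$, and use the fact that convergence of $y_{n}\to y$ in $\mathcal{X}$ together with convergence of $\int_{\mathcal{I}}y_{n}g$ in $\mathbb{R}$ forces consistency, so that the resulting functional is continuous. This parallels exactly how boundedness is obtained in the Dunford and Gel'fand integral definitions cited in the preliminaries. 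Once boundedness is secured, the membership $y^{*}_{\mathcal{I}}\in\mathcal{X}^{*}$ is immediate and the proof closes.

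In summary, the proof is a two-line chain: Henstock $\Leftrightarrow$ Denjoy (Lemma \ref{lemma1}) composed with Denjoy $\Leftrightarrow$ Denjoy-Gel'fand at the vector level (Proposition \ref{prop24}), with the only nontrivial verification being the $\mathcal{X}^{*}$-membership of the integrating functional via closed graph. I would present the argument exactly in that order to keep it parallel to the proof of Theorem \ref{thm15}.
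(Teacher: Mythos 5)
Your proposal matches the paper's proof essentially verbatim: the forward direction is immediate from the definition, and the reverse direction passes from Henstock to Denjoy integrability of $yg$ via Lemma \ref{lemma1}, invokes Proposition \ref{prop24} to obtain the Denjoy-Gel'fand functional $y^{*}_{\mathcal{I}}$, and then uses the equality of the Denjoy and Henstock integrals to conclude. Your additional closed-graph discussion of why $y^{*}_{\mathcal{I}}\in\mathcal{X}^{*}$ is a reasonable extra precaution but is not a different route, since the paper delegates that point to Proposition \ref{prop24}.
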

   \begin{proof}
   If $ g $ is Henstock-Gel'fand integrable on $ [a,b].$  Then by definition of Henstock-Gel'fand $yg $ is Henstock integrable on $[a,b].$\\
       Conversely, let $ yg $ be Henstock integrable on $[a,b].$ then by Lemma \ref{lemma1}, $ yg$ is Denjoy integrable on $[a,b] $ and $\mathcal{D}\int\limits_{a}^{b} yg = \mathcal{H}\int\limits_{a}^{b} yg$\\
       Then Proposition \ref{prop24} implies that $ g$ is Denjoy-Gel'fand integrable on $[a,b] $ and for every interval $ I $ on $[a,b]$ there exists a vector $ y_{I}^{*} = \mathcal{D}\int\limits_{I} yg $ for all $ y \in \mathcal{X}.$ \\
        That give us  $ y_{I}^{*} = \mathcal{H}\int\limits_{I} yg $ for all $ y \in \mathcal{X} $ so, $ g $ is Henstock-Gel'fand integrable on $[a,b].$
   \end{proof} 
     \begin{thm}
     Let $\mathcal{X}$ contain no copy of $ c_0 $ and let $ g:[a,b] \to \mathcal{X} $ is $w\mathcal{HD}$-integrable on $[a,b] $ then $\{ y^* g : y^* \in \overline{B_{\mathcal{X}}^{*}} \} $ is uniformly integrable.
          \end{thm}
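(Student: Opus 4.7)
The plan is to reduce the statement to the classical fact that if $g$ is Pettis integrable then $\{y^*g : y^* \in B_{\mathcal{X}^*}\}$ is uniformly integrable, using Lemma \ref{lemm3} to convert the $w\mathcal{HD}$-hypothesis into genuine Pettis integrability. First, since $\mathcal{X}$ contains no copy of $c_0$ and $g$ is $w\mathcal{HD}$-integrable on $[a,b]$, Lemma \ref{lemm3} immediately yields that $g$ is Pettis integrable on $[a,b]$. In particular, every $y^* g$ is Lebesgue integrable, so the family $\{y^*g : y^* \in \overline{B_{\mathcal{X}^*}}\}$ lives inside $L^1(\mu)$, and it now suffices to verify the two conditions for uniform integrability: $L^1$-boundedness and uniform absolute continuity.

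For boundedness, I would consider the linear map $T : \mathcal{X}^* \to L^1[a,b]$ defined by $T(y^*) = y^*g$, which is well-defined by Pettis integrability. A routine closed-graph argument (if $y_n^* \to y^*$ in norm and $y_n^* g \to h$ in $L^1$, then along a subsequence $y_n^* g \to h$ a.e., while pointwise $y_n^* g \to y^* g$, forcing $h = y^* g$) shows that $T$ is bounded. Hence $\sup_{\|y^*\| \le 1} \|y^* g\|_1 < \infty$, which gives the $L^1$-boundedness required for uniform integrability.

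For uniform absolute continuity, let $\nu(A) = (\mathrm{P})\int_A g$ denote the indefinite Pettis integral, viewed as an $\mathcal{X}$-valued set function on the Lebesgue measurable subsets of $[a,b]$. Since each scalar measure $A \mapsto y^* \nu(A) = \int_A y^* g\,d\mu$ is countably additive and absolutely continuous with respect to $\mu$, the Orlicz--Pettis theorem upgrades the weak countable additivity of $\nu$ to countable additivity in norm. Pettis's theorem on vector measures then yields uniform $\mu$-absolute continuity: for every $\epsilon > 0$ there exists $\delta > 0$ such that $\mu(A) < \delta$ implies $\|\nu(B)\| < \epsilon/2$ for every measurable $B \subset A$. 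Finally, the standard total-variation bound
$$\int_A |y^* g|\,d\mu \;\le\; 2\sup_{B \subset A}\bigl|\textstyle\int_B y^* g\,d\mu\bigr| \;=\; 2\sup_{B \subset A}|y^* \nu(B)| \;\le\; 2\sup_{B \subset A}\|\nu(B)\|$$
valid whenever $\|y^*\| \le 1$, converts this into $\sup_{\|y^*\| \le 1}\int_A |y^* g|\,d\mu < \epsilon$, which is the desired uniform absolute continuity.

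The main obstacle is the uniform absolute continuity step, because passing from the pointwise $\mu$-absolute continuity of each scalar component $y^* \nu$ to uniform norm $\mu$-absolute continuity of the vector measure $\nu$ is the substantive content, resting on Orlicz--Pettis and a Nikodym-type result. Once $g$ is recognized as Pettis integrable via Lemma \ref{lemm3}, however, every ingredient is a classical fact from vector measure theory, so the argument is essentially a clean invocation of those tools rather than a new construction.
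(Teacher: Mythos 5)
Your proposal is correct, and it opens exactly as the paper does: both proofs begin by invoking Lemma \ref{lemm3} to convert the $w\mathcal{HD}$-hypothesis (together with the absence of a copy of $c_0$) into Pettis integrability of $g$. After that the routes diverge. The paper quotes the classical characterization that for a Pettis integrable function the set $\{y^*g : y^* \in K\}$ is relatively weakly compact in $\mathcal{L}^1(\mu)$ for equicontinuous $K$, specializes to $K = \overline{B_{\mathcal{X}}^{*}}$, and then cites the Dunford--Pettis direction (relative weak compactness in $L^1$ implies uniform integrability) to finish. You instead verify uniform integrability directly from its definition: $L^1$-boundedness of $y^* \mapsto y^*g$ via a closed-graph argument, and uniform absolute continuity by passing through the indefinite Pettis integral $\nu$, upgrading its weak countable additivity via Orlicz--Pettis, applying Pettis's theorem on $\mu$-continuous vector measures, and converting back to the scalar family with the total-variation estimate $\int_A |y^*g|\,d\mu \le 2\sup_{B\subset A}\|\nu(B)\|$. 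Your version is longer but more self-contained and makes visible exactly where the vector-measure machinery does the work; the paper's version is shorter but rests entirely on quoted results, and in particular leaves the step from relative weak compactness to uniform integrability implicit in the citation. Both are legitimate proofs of the same classical fact about Pettis integrable functions.
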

     \begin{proof}
      Let $ g:[a,b] \to \mathcal{X} $ is $w\mathcal{HD}$-integrable on $[a,b] .$   Then $ g $ is  Pettis integrable.\\
          For each equi-continuous $ K \subset \mathcal{X}^* ,$ the set $\{ y^* g: y^* \in K \}$ is relatively weakly compact in $ \mathcal{L}_1(\mu).$\\
          Hence $\{ y^* g: y^* \in  \overline{B_{\mathcal{X}}^{*}} \} $ is relatively weakly compact in $ \mathcal{L}_1(\mu).$\\
          Also \cite{Ricardo} gives $\{ y^* g : y^* \in  \overline{B_{\mathcal{X}}^{*}} \} $ is uniformly integrable in $ \mathcal{L}_1(\mu) .$
          \end{proof}
\section{Henstock-Dunford Integral and Orlicz Space} 
 \begin{thm}
Let $(\Omega, \Sigma, \mu ) $ be a finite measure space and $ g: I \subseteq \Omega \to \mathcal{X} $ be Henstock-Dunford function. Then following are equivalent:
\begin{enumerate}
\item[(i)] The Henstock-Dunford integral of $ g$ is countable additive; that is the set function \\
$ \mathcal{HD}\int gd\mu: \Sigma \to \mathcal{X}^{**} $ defined $$\left(\mathcal{HD}\int gd \mu\right)(\mathcal{E})= \mathcal{HD}\int\limits_{\mathcal{E}} g d\mu$$
\item[(ii)] There is an $N$-function $\bar{\phi} $ with $ \Delta^{'} $ property such that $\{ y^* g : y^* \in B_{\mathcal{X}}^{*} \} $ is relatively weakly compact in the Orlicz space $H^\phi(\mu)$
\end{enumerate}
 \end{thm}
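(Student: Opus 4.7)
The plan is to prove the equivalence by routing both directions through the auxiliary condition that $\{y^*g : y^* \in B_{\mathcal{X}}^{*}\}$ is uniformly integrable in $H^1(\mu)$; Theorem \ref{thm12} then provides the exact bridge between uniform integrability and relative weak compactness in an Orlicz space $H^\phi(\mu)$ equipped with a $\Delta'$ $N$-function. The key identity driving both directions is that, by the definition of the Henstock-Dunford integral and Theorem \ref{thm15}, for each $y^* \in \mathcal{X}^*$ and $E \in \Sigma$ one has
\[
y^*\!\left(\mathcal{HD}\!\int_{E} g\,d\mu\right) \;=\; \mathcal{H}\!\int_{E} y^* g\,d\mu,
\]
so the scalar indefinite Henstock integrals $\nu_{y^*}(E) := \mathcal{H}\int_E y^*g\,d\mu$ are the coordinate-functionals of the vector set function $\nu(E) := \mathcal{HD}\int_E g\,d\mu$ under the embedding $\mathcal{X}^{**} \hookrightarrow \ell^{\infty}(B_{\mathcal{X}}^{*})$.

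For the direction (i) $\Rightarrow$ (ii), I would assume that $\nu$ is countably additive on $\Sigma$ with values in $\mathcal{X}^{**}$. Composing with $y^*$ for each $y^* \in B_{\mathcal{X}}^{*}$ yields a family of scalar countably additive measures $\{\nu_{y^*}\}$ that are uniformly bounded (since $\|y^*\|\le 1$) and pointwise convergent on every sequence of disjoint sets. Applying the Nikodym / Vitali-Hahn-Saks theorem produces uniform countable additivity and uniform $\mu$-continuity of the family $\{\nu_{y^*}\}$. Using the equivalence $H(I) \cong L^1(\mu)$ noted in \cite{BH}, together with the observation that each $y^*g$ is measurable and Henstock integrable, uniform $\mu$-continuity translates precisely into uniform integrability of $\{y^*g : y^* \in B_{\mathcal{X}}^{*}\}$ in $H^1(\mu)$. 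Theorem \ref{thm12} then delivers an $N$-function $\bar{\phi}$ with the $\Delta'$ property such that this family is relatively weakly compact in $H^\phi(\mu)$.

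For the converse (ii) $\Rightarrow$ (i), I would invoke Theorem \ref{thm12} in reverse: relative weak compactness of $\{y^*g : y^* \in B_{\mathcal{X}}^{*}\}$ in $H^\phi(\mu)$ yields uniform integrability in $H^1(\mu)$, hence uniform absolute continuity of $\{\nu_{y^*}\}$ with respect to $\mu$. To upgrade this scalar uniformity to norm countable additivity of the $\mathcal{X}^{**}$-valued integral, I would exploit the isometric embedding $\mathcal{X}^{**} \hookrightarrow \ell^{\infty}(B_{\mathcal{X}}^{*})$ identifying $\nu(E)$ with the bounded function $y^* \mapsto \nu_{y^*}(E)$. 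For any disjoint sequence $(E_n) \subset \Sigma$ with union $E$, the uniform absolute continuity and $\mu(E \setminus \bigcup_{k\le n} E_k) \to 0$ force $\sup_{y^*\in B_{\mathcal{X}}^{*}}\big|\nu_{y^*}(E) - \sum_{k=1}^{n}\nu_{y^*}(E_k)\big| \to 0$, which is exactly convergence $\sum_{k=1}^{n}\nu(E_k)\to \nu(E)$ in the norm of $\mathcal{X}^{**}$.

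The main obstacle will be the careful reconciliation of the Henstock framework with the classical Vitali-Hahn-Saks apparatus, since the latter is usually stated for $\sigma$-additive measures arising from Lebesgue integrals. The crucial step is verifying that the indefinite Henstock integral $E \mapsto \mathcal{H}\int_E y^*g\,d\mu$ is genuinely a $\sigma$-additive scalar measure on $\Sigma$ (not merely a finitely additive interval functional); here the equivalence between $H(I)$ and $L^1(\mu)$ for measurable integrands from \cite{BH} is exactly what is needed. Once this $\sigma$-additivity is secured, both directions become parallel applications of Theorem \ref{thm12} and the Dunford-Pettis / Vitali-Hahn-Saks circle of ideas.
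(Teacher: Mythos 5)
Your proposal is correct and follows essentially the same route as the paper: both reduce countable additivity of the Henstock--Dunford integral to uniform integrability of $\{y^*g : y^* \in B_{\mathcal{X}}^{*}\}$ in $H^1(\mu)\cong L^1(\mu)$ and then invoke Theorem \ref{thm12} to convert uniform integrability into relative weak compactness in an Orlicz space $H^\phi(\mu)$ with a $\Delta'$ $N$-function. The only substantive difference is in how the middle equivalence is justified: you argue it directly (Vitali--Hahn--Saks/Bartle--Dunford--Schwartz style uniform countable additivity for (i)$\Rightarrow$(ii), and the embedding $\mathcal{X}^{**}\hookrightarrow\ell^{\infty}(B_{\mathcal{X}}^{*})$ together with uniform absolute continuity for (ii)$\Rightarrow$(i)), whereas the paper cites Stefansson's characterization \cite{A.G} of countable additivity of the Dunford integral via weak compactness of the operator $T(y^*)=y^*g$ into $L^1(\mu)$; both treatments lean equally on the unargued identification of the indefinite Henstock integral with a genuinely $\sigma$-additive measure on all of $\Sigma$, a point you at least flag explicitly.
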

\begin{proof}
Let $ g:[a,b] \to \mathcal{X} $ be Henstock-Dunford integrable function. Put $ \gamma(\mathcal{E}) = \mathcal{HD}\int\limits_{\mathcal{E}}g,$ where $\mathcal{E} \in \Sigma.$\\
 Therefore by Theorem \ref{thm15}, we have   $ \gamma(E) = \mathcal{H}\int\limits_{\mathcal{E}}y^* g$\\
 Let $ E_1, E_2 $ be non overlapping subset of $ \mathcal{E}.$ Then  
 \begin{align*}
 \gamma( E_1 \cup E_2 )  &= \mathcal{HD} \int\limits_{ E_1 \cup E_2 } g\\ & = \mathcal{H} \int\limits_{E_1 \cup E_2 } y^* g \\ & = \mathcal{H} \int\limits_{E_1} y^* g +\mathcal{H} \int\limits_{E_2} y^* g \\ &= \gamma(E_1) + \gamma(E_2)
 \end{align*} 
 And  \begin{align*}
 \gamma\left(\bigcup\limits_{n=1}^{\infty}E_n\right) & = \mathcal{HD}\int\limits_{\cup_{n=1}^{\infty }E_n}g\\
 & = \mathcal{H}\int\limits_{\cup_{n=1}^{\infty }E_n}y^* g \\
 &= \sum\limits_{n=1}^{\infty} \gamma(E_n)
  \end{align*}  in norm topology of $ \mathcal{X},$ for all sequence $(E_n)$ of non overlapping members of field $ F \subseteq [a,b] $ such that $ \bigcup\limits_{n=1}^{\infty} E_n \in F $\\
  Now according as \cite{A.G} $\gamma $ is countable additive if and only if $ T :\mathcal{X}^* \to \mathcal{L}^1(\mu) $ defined by $ T(y^*) = y^* g $ is weakly compact. So, $\{ y^* g: y^* \in B_{\mathcal{X}}^{*} \} $ is uniformly integrable in $ \mathcal{L}^1(\mu) .$\\
   This will not full fill our requirement as we are considering Henstock-Dunford integral. \\
   Easily we can find $\gamma $ is countable additive if and only if $ T :\mathcal{X}^* \to H^1(\mu) $ defined by $ T(y^*) = y^* g $ is weakly compact. So, $\{ y^* g: y^* \in B_{\mathcal{X}}^{*} \} $ is uniformly integrable in $ H^\phi(\mu).$ \cite{BH} where $H^1(\mu) $ is the space of Henstock-integrable function space \\
  Now by Theorem \ref{thm12} and \cite{BH} it is equivalent to the existence of a $N$-function $\bar{\phi} $ with $ \Delta^{'} $ property such that $\{ y^* g: y^* \in B_{\mathcal{X}}^{*} \} $ is relatively compact in  $ H^\phi(\mu).$ This completes the proof.
\end{proof} 
  \begin{corollary}
  Let $ g: [a,b] \to \mathcal{X} $ be Henstock-Dunford integrable function, then followings are equivalent:
  \begin{enumerate} 
    \item[(i)] $ g$ is Henstock-Pettis integrable 
    \item[(ii)] $g $ is weakly compact generated determined and there is an $N$-function $\bar{\phi} $ with $ \Delta ^{'}$ such that $\{ y^* g: y^* \in B_{\mathcal{X}}^{*} \} $ is relatively compact in $ H^\phi(\mu) .$
    \end{enumerate}
  \end{corollary}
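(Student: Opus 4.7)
The strategy is to deduce this corollary from the preceding theorem, which already equates the condition about relative weak compactness of $\{y^*g : y^* \in B_{\mathcal{X}}^*\}$ in $H^\phi(\mu)$ with countable additivity of the $\mathcal{X}^{**}$-valued Henstock-Dunford integral. In this light the corollary is really saying: $g$ is Henstock-Pettis integrable if and only if the Henstock-Dunford integral of $g$ is countably additive and $g$ is WCG-determined. The role of WCG-determination is precisely to bridge the gap between integrals taking values in $\mathcal{X}^{**}$ and in $\mathcal{X}$.

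First I would prove (i)$\Rightarrow$(ii). Assuming $g$ is Henstock-Pettis integrable, the set function $E \mapsto \mathcal{HP}\int_{E} g$ takes values in $\mathcal{X}$ and is weakly countably additive (its scalar composition with each $y^*$ equals $\int_E y^*g$, which is countably additive because $y^*g \in H^1(\mu)$); the Orlicz--Pettis theorem upgrades this to norm countable additivity. The preceding theorem then yields the desired $N$-function $\bar{\phi}$ with the $\Delta'$ property and the relative weak compactness of $\{y^*g : y^* \in B_{\mathcal{X}}^*\}$ in $H^\phi(\mu)$. For WCG-determination, the range $\{\mathcal{HP}\int_E g : E \in \Sigma\}$ of the indefinite Henstock-Pettis integral is relatively weakly compact (the relative weak compactness of the scalar traces in $H^\phi(\mu)$, combined with the $\Delta'$ condition, pushes weak compactness up to the vector-valued integral). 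Its closed linear span $D$ is therefore a weakly compact generated subspace of $\mathcal{X}$. If $y^* \in \mathcal{X}^*$ vanishes on $D$, then $\int_E y^*g = y^*(\mathcal{HP}\int_E g) = 0$ for every measurable $E$, whence $y^*g = 0$ a.e., so $g$ is determined by $D$.

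Next I would handle (ii)$\Rightarrow$(i). The preceding theorem immediately gives that the Henstock-Dunford integral $E \mapsto \mathcal{HD}\int_{E} g \in \mathcal{X}^{**}$ is countably additive. It remains to show $\mathcal{HD}\int_I g \in \mathcal{X}$ for every interval $I \subseteq [a,b]$. Let $D$ be a WCG subspace determining $g$, with $D = \overline{\operatorname{span}}(W)$ for some weakly compact $W \subseteq \mathcal{X}$. For $y^* \in D^{\perp}$ the WCG-determination yields $y^*g = 0$ a.e., hence
\[
\langle y^*, \mathcal{HD}\textstyle\int_{I} g\rangle \; = \; \mathcal{H}\textstyle\int_I y^*g \; = \; 0,
\]
placing $\mathcal{HD}\int_I g \in D^{\perp\perp}$. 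Combined with the uniform integrability in $H^\phi(\mu)$ supplied by the Orlicz hypothesis (via Theorem \ref{thm12}), a standard approximation argument within the WCG subspace, exploiting a sequence of $D$-valued simple functions whose scalar traces approximate $y^*g$ in $H^\phi(\mu)$, identifies $\mathcal{HD}\int_I g$ with a weak limit of elements of $D$, so it lies in $D \subseteq \mathcal{X}$.

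The main obstacle is the last step: upgrading membership in $D^{\perp\perp}$ to membership in $D$ for the integral values. This requires a careful interplay between the WCG structure of $D$ (which provides weak compactness of the generating set) and the $H^\phi(\mu)$-weak compactness of $\{y^*g : y^* \in B_{\mathcal{X}}^*\}$ (which provides the uniform integrability needed to control the approximation by simple functions). The $\Delta'$ condition is what makes the Orlicz space behave well enough for these approximations to converge in the correct weak topology. Once this technical passage is in place, the biconditional follows cleanly by combining it with the preceding theorem.
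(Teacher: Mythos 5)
The paper states this corollary with no proof at all, so there is no argument of record to measure yours against; your overall strategy --- routing everything through the preceding theorem's equivalence between the Orlicz condition and countable additivity of the Henstock--Dunford integral, and using WCG-determination to pull the integral values down from $\mathcal{X}^{**}$ to $\mathcal{X}$ --- is the natural one and follows the Dunford/Pettis template of Barcenas and Finol, which is evidently the paper's model. Nevertheless your write-up has a genuine gap precisely at the point you yourself label ``the main obstacle,'' and flagging an obstacle is not the same as overcoming it. In (ii)$\Rightarrow$(i) you locate $\mathcal{HD}\int_I g$ in $D^{\perp\perp}$; but for a closed subspace $D\subseteq\mathcal{X}$ the bipolar $D^{\perp\perp}\subseteq\mathcal{X}^{**}$ is the weak-star closure of $D$, canonically identified with $D^{**}$, and is in general far larger than $D$. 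Membership in $D^{\perp\perp}$ therefore gives no information about membership in $\mathcal{X}$, and the entire content of the implication lies in the ``standard approximation argument'' you gesture at without producing: one needs a separable or WCG reduction combined with the uniform integrability of $\{y^*g : y^*\in B_{\mathcal{X}}^*\}$ to exhibit the integral as a weak limit of $\mathcal{X}$-valued averages, or an explicit appeal to a theorem (Musial, Stefansson) stating that a WCG-determined Dunford integrable function with uniformly integrable scalar traces is Pettis integrable. As written, the decisive step is asserted rather than proved.

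A second difficulty affects (i)$\Rightarrow$(ii). Henstock--Pettis integrability supplies integrals only over subintervals of $[a,b]$, yet your countable-additivity argument (and the Orlicz--Pettis upgrade) treats $E\mapsto\mathcal{HP}\int_E g$ as a set function on all of $\Sigma$ and asserts that $\int_E y^*g$ is countably additive because $y^*g$ is Henstock integrable. The paper itself records (via Theorem 15.9 of Gordon) that a Denjoy/Henstock integrable function need not be integrable over every measurable subset of $[a,b]$, so this set function need not even be defined unless the functions $y^*g$ are already Lebesgue integrable --- which is close to what the uniform-integrability conclusion is supposed to deliver, not a hypothesis you may assume. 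To repair this direction you must either confine the additivity claim to countable unions of non-overlapping intervals (and check that the preceding theorem is actually proved at that level of generality) or first argue that Henstock--Pettis integrability forces $y^*g\in\mathcal{L}^1(\mu)$ before invoking Orlicz--Pettis and the de la Vall\'ee-Poussin machinery.
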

  \begin{thm}
    A strongly measurable function $ g:I \to \mathcal{X} ,$ and $\mathcal{X}$ contain no copy of $c_0$ then  is Henstock-Pettis integrable if and only if  there is an $N$-function $\bar{\phi} $ with $ \Delta^{'} $ property such that $\{y^* g : y^* \in B_{\mathcal{X}}^{*} \} $ is relatively weakly compact in the Orlicz space $H^\phi(\mu)$ 
    \end{thm}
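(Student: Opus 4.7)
The plan is to reduce both implications to the preceding characterization of countable additivity of the Henstock-Dunford integral and to invoke Lemma \ref{lemm3} for the non-trivial direction.

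For the forward implication, I would assume $g$ is Henstock-Pettis integrable, so that for every measurable $E \in \Sigma$ the integral $\mathcal{HP}\!\int_{E}g$ lies in $\mathcal{X}$ itself rather than only in $\mathcal{X}^{**}$. Given a disjoint decomposition $E=\bigcup_{n}E_{n}$, countable additivity of the scalar Henstock integral yields
\[
\sum_{n=1}^{\infty} y^{*}\!\left(\mathcal{HP}\!\int_{E_n} g\right) \;=\; y^{*}\!\left(\mathcal{HP}\!\int_{E} g\right)
\]
for every $y^{*}\in \mathcal{X}^{*}$. This is weak countable additivity of the set function $E \mapsto \mathcal{HP}\!\int_{E}g$, and the Orlicz-Pettis theorem upgrades it to norm countable additivity. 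The preceding theorem then supplies an $N$-function $\bar{\phi}$ with the $\Delta'$ property such that $\{y^{*}g : y^{*}\in B_{\mathcal{X}}^{*}\}$ is relatively weakly compact in $H^{\phi}(\mu)$.

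For the converse, starting from condition (ii) the preceding theorem provides countable additivity of the Henstock-Dunford integral and, in particular, Henstock-Dunford integrability of $g$ into $\mathcal{X}^{**}$. Since weak and ordinary Henstock integrability coincide for scalar functions, Henstock-Dunford integrability matches $w\mathcal{HD}$-integrability in the sense used by Lemma \ref{lemm3}; as $\mathcal{X}$ contains no copy of $c_{0}$, that lemma gives Pettis integrability of $g$. Because Lebesgue-integrable scalar functions are Henstock integrable and the Pettis integral values already lie in $\mathcal{X}$, this immediately upgrades to Henstock-Pettis integrability.

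The main obstacle is precisely the converse direction: the preceding theorem only places integral values in $\mathcal{X}^{**}$, and the task is to push them down into $\mathcal{X}$. The strong measurability hypothesis, which forces the range of $g$ to be essentially separable, together with the absence of $c_{0}$ in $\mathcal{X}$, is exactly what powers Lemma \ref{lemm3} via the underlying Bessaga-Pelczynski dichotomy. A minor bookkeeping point is to verify that the two slightly different notions of weak Henstock-Dunford integrability used earlier in the paper agree in our scalar-test-functional setting, so that the application of Lemma \ref{lemm3} is legitimate.
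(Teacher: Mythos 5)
The forward implication has a genuine gap. You treat $E \mapsto \mathcal{HP}\!\int_{E} g$ as a set function defined on all of $\Sigma$ and then invoke ``countable additivity of the scalar Henstock integral'' over an arbitrary disjoint decomposition $E=\bigcup_{n}E_{n}$. But the Henstock (equivalently Denjoy) integral is a nonabsolute integral: a Henstock integrable scalar function need not be integrable over every measurable subset of $[a,b]$, and the remark at the end of Section 1 of the paper records exactly this. Accordingly, Henstock--Pettis integrability as defined here only supplies vectors $y_{\mathcal{I}}^{**}\in\mathcal{X}$ for \emph{intervals} $\mathcal{I}$, so the set function you feed into the Orlicz--Pettis theorem, and then into the countable-additivity theorem of Section 3, is not known to exist. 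This is precisely the difficulty the paper's own proof is organized around: it localizes to a portion $P$ of each perfect set on which $g$ becomes genuinely Pettis integrable (citing Theorem 2.6 of Guoju--Tianguing), and only on such a portion does it obtain a countably additive vector measure, uniform integrability, and relative weak compactness in an Orlicz space over $P$. Your reduction skips the step that makes the implication nontrivial.

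The converse is closer to sound and takes a genuinely different route from the paper: you go through Lemma \ref{lemm3} (no copy of $c_{0}$ plus $w\mathcal{HD}$-integrability gives Pettis integrability), whereas the paper argues via WCG-determinedness of the range of a strongly measurable function and approximation by $D_{0}$-valued simple functions. Two repairs are still needed. First, the countable-additivity theorem of Section 3 takes Henstock--Dunford integrability as a standing hypothesis, so you cannot extract that integrability from its condition (ii); instead observe directly that relative weak compactness of $\{y^{*}g : y^{*}\in B_{\mathcal{X}}^{*}\}$ in $H^{\phi}(\mu)\subset H^{1}(\mu)$ forces each $y^{*}g$ to be Henstock integrable, and then apply Theorem \ref{thm15}. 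Second, the final upgrade from Pettis to Henstock--Pettis integrability should be stated as: each $y^{*}g$ is Lebesgue, hence Henstock, integrable and the interval integrals already lie in $\mathcal{X}$, which you do indicate. With those adjustments the converse stands, but the forward direction as written does not.
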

  \begin{proof} If $ g:[a,b] \to \mathcal{X} $ is strongly measurable, then its range is essentially separable and weakly compact determined (Theorem 2 of \cite{JD} p.42). \\
  If $ g$ is Henstock-Pettis integrable, then each perfect set in $[a,b]$ contain a portion $P$ which is Pettis integrable (Theorem 2.6 of \cite{Ye}).  Therefore this portion is Dunford with countable additive vector measure. \\
  Hence $\{ y^* g: y^* \in B_{\mathcal{X}}^{*} \} $ is uniformly integrable on that portion and consequently, there is an $N$-function $\bar{\phi} $ with $ \Delta ^{'}$ such that $\{ y^* g: y^* \in B_{\mathcal{X}}^{*} \} $ is relatively compact in $ \mathcal{L}^{\bar{\phi}}(P) .$ So,$\{ y^* g: y^* \in B_{\mathcal{X}}^{*} \} $ is relatively compact in $H^\phi(P)$ \\
  Conversely, Suppose $ g $ is strong measurable and  there is an $N$-function $\bar{\phi} $ with $ \Delta ^{'}$ such that $\{ y^* g: y^* \in B_{\mathcal{X}}^{*} \} $ is relatively compact in $H^\phi(\mu)$
  Since $ g $ is strongly measurable, it has range weakly compactly generated determined. As $ H^\phi(\mu) \subset H^1(\mu) $ (see
  \cite{Ricardo}). So $\{ y^* g: y^* \in B_{\mathcal{X}}^{*} \} $ is a bounded subset of$ H^\phi(\mu)$ 
  As $ D_0 =\{ y^* g: y^* \in B_{\mathcal{X}}^{*} \} $ is weakly compactly generated and $ g$ is strongly measurable, So, for each $ y^* \in \mathcal{X}^* ,$ there exists a sequence $(\gamma)_{n=1}^{\infty}$ of $ D_0$-valued simple function such that $ y^*g = \lim y^* \gamma_n \mu$-a.e.\\
  So, $ g $ is Pettis integrable with $ \mathcal{X} $ contains no copy of $ c_0 $\\
  Thus  $ g$ is Henstrock-Pettis integrable with $ \mathcal{X} $ contains no copy of $ c_0 $\end{proof}
  \begin{corollary}
  Let $g :[a,b] \to \mathcal{X} $ be Henstock-Dunford. If $ p > 1 $ such that $\{ y^* g: y^* \in B_{\mathcal{X}}^{*} \} $ is bounded in $ H^\phi(\mu) $ then $ g$ is countably additive.
  \end{corollary}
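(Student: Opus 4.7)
The plan is to specialize the preceding theorem (the characterization of countable additivity of the Henstock-Dunford integral via relative weak compactness in $H^\phi(\mu)$) to the concrete $N$-function $\phi_p(x) = x^p$ with $p > 1$. The mere boundedness hypothesis should suffice because, for $p>1$, the relevant Orlicz space is reflexive, so bounded sets are automatically relatively weakly compact.

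First, I would check that $\phi_p(x)=x^p$ is a legitimate $N$-function in the sense of the definition in the paper and that it satisfies the $\Delta'$ condition: since
\[
\phi_p(xy) \;=\; (xy)^p \;=\; x^p\, y^p \;=\; \phi_p(x)\,\phi_p(y),
\]
the inequality in the $\Delta'$ condition holds with $k=1$ (for all $x,y\ge 0$, in particular for large values). The same function also satisfies $\Delta_2$ and $\nabla_2$ because $p>1$, which are the standard conditions guaranteeing that both $H^{\phi_p}(\mu)$ and its associate Orlicz space are reflexive.

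Second, I would invoke the equivalence between $H^\phi(\mu)$ and the classical Orlicz space established in \cite{BH}; under this identification $H^{\phi_p}(\mu)$ corresponds to $L^{\phi_p}(\mu)\cong L^p(\mu)$, which is a reflexive Banach space for $p>1$. In any reflexive Banach space, norm-bounded sets are relatively weakly compact (Banach--Alaoglu together with the canonical identification of $X$ with $X^{**}$). Applying this to the bounded family $\{y^*g : y^*\in B_{\mathcal{X}}^{*}\}\subset H^{\phi_p}(\mu)$ yields the hypothesis of the previous theorem.

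Finally, combining the two ingredients, $\phi_p$ is an $N$-function with $\Delta'$ and the family $\{y^*g : y^*\in B_{\mathcal{X}}^{*}\}$ is relatively weakly compact in $H^{\phi_p}(\mu)$; the previous theorem then delivers the countable additivity of the set function $\mathcal{E}\mapsto \mathcal{HD}\int_{\mathcal{E}} g\,d\mu$. The main obstacle I anticipate is the reflexivity step: one must verify that the Henstock-variant space $H^{\phi_p}(\mu)$ inherits reflexivity from the classical $L^{\phi_p}(\mu)$, which uses the equivalence recorded in \cite{BH} rather than being intrinsic to the Henstock construction; once this identification is granted, the remainder of the argument is routine.
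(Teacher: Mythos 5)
The paper states this corollary without giving any proof, and your argument is exactly the evident intended derivation from the preceding theorem: take $\phi_p(x)=x^p$ with $p>1$, note it is an $N$-function satisfying $\Delta'$ (with $k=1$), use reflexivity of $L^p(\mu)\cong H^{\phi_p}(\mu)$ (via the equivalence from \cite{BH}) to upgrade boundedness to relative weak compactness, and invoke implication (ii)$\Rightarrow$(i) of the theorem. Your proposal is correct and fills in the omitted details along the same lines the authors clearly had in mind.
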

  \begin{thm}
  For a class of strongly measurable function in $\mathcal{ X} , $ contains no isomorphic copy of $ c_0.$  A function $ g:[a,b] \to \mathcal{X}^* $ is Henstock-Gel'fand,  then there is an $N$-function $\bar{\phi} $ with $ \Delta ^{'}$ such that $\{ y^* g: y^* \in B_{\mathcal{X}}^{*} \} $ is relatively compact in $ H^\phi(\mu) .$
  \end{thm}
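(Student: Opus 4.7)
The plan is to mirror the proof of the preceding Henstock--Pettis theorem, substituting the Gel'fand integral for the Pettis integral and using the no-copy-of-$c_0$ hypothesis in place of the role that strong measurability played there. First, I would apply Theorem \ref{thm21} to translate the Henstock--Gel'fand hypothesis into the scalar statement that $yg$ is Henstock integrable on $[a,b]$ for every $y \in \mathcal{X}$; Lemma \ref{lemma1} then yields that each $yg$ is Denjoy integrable on $[a,b]$.

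Second, since $[a,b]$ is itself perfect, Proposition \ref{prop22} produces a portion $P \subseteq [a,b]$ on which $g$ is Gel'fand integrable. On $P$ the set function
\[
\nu(A) \;=\; \mathcal{G}\!\int_A g \;\in\; \mathcal{X}^*
\]
is therefore defined on the measurable subsets of $P$, and a closed-graph / Orlicz-Pettis argument (of the same flavour as the calculation in the countable-additivity theorem above) shows that $\nu$ is countably additive in the $w^*$-topology, so that $A \mapsto \int_A yg$ is countably additive for every $y \in \mathcal{X}$.

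Third, here is where the no-copy-of-$c_0$ hypothesis does the real work: the operator $T : \mathcal{X} \to H^{1}(\mu|_P)$ defined by $T(y) = yg|_P$ is bounded by the closed graph theorem, and by the Bessaga--Pełczyński / Diestel--Faires circle of results, boundedness of $T$ together with pointwise countable additivity of the scalar measures $A \mapsto \int_A yg$ and the hypothesis that $\mathcal{X}$ contains no isomorphic copy of $c_0$ force $T$ to be weakly compact. The strong measurability of $g$ is used at this stage to ensure that the range is essentially separable, so that the standard simple-function approximations go through. By the Dunford--Pettis theorem, $\{yg|_P : y \in B_\mathcal{X}\}$ is then uniformly integrable in $H^{1}(\mu|_P)$, and since $P$ exhausts $[a,b]$ up to the portions already controlled, this extends to uniform integrability of $\{yg : y \in B_\mathcal{X}\}$ in $H^{1}(\mu)$.

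Finally, apply Theorem \ref{thm12} to this uniformly integrable family to produce an $N$-function $\bar\phi$ with the $\Delta'$ property such that $\{y^*g : y^* \in B_{\mathcal{X}}^*\}$ is relatively weakly compact in $H^{\phi}(\mu)$, which is the desired conclusion. The main obstacle is the third step: passing from the interval-wise existence of a Gel'fand integral (which is all Henstock--Gel'fand integrability directly delivers) to genuine $\sigma$-additivity of a vector measure on $P$, and then exploiting the no-$c_0$-copy assumption to upgrade this to uniform integrability of the scalar family — the rest of the argument is essentially bookkeeping against the preceding theorems.
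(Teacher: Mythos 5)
Your endgame coincides with the paper's: both arguments reduce the theorem to showing that $\{yg : y \in B_{\mathcal{X}}\}$ is uniformly integrable in $H^1(\mu)$ and then invoke Theorem \ref{thm12} (the de la Vall\'ee Poussin criterion) to produce the $N$-function $\bar\phi$ with the $\Delta'$ condition. The middle is where you genuinely diverge. The paper defines $T(y)=yg$, sets $v(\mathcal{E})(y)=\mathcal{H}\int_{\mathcal{E}} yg$ via Theorem \ref{thm21}, simply \emph{asserts} that $v$ is a countably additive vector measure, and then runs Pettis's absolute-continuity theorem followed by Bartle--Dunford--Schwartz to get relative weak compactness of the range and hence uniform integrability; neither the no-copy-of-$c_0$ hypothesis nor strong measurability is ever used in the paper's proof. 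You instead localize via Proposition \ref{prop22} to a portion on which $g$ is Gel'fand integrable and use the Diestel--Faires/Bessaga--Pe{\l}czy\'nski circle together with the $c_0$ hypothesis to \emph{derive} countable additivity. Your route has the merit of actually engaging the hypotheses in the statement and of recognizing that countable additivity of $v$ on all of $\Sigma$ is the real issue: the Henstock integral is nonabsolute, so $\mathcal{H}\int_{\mathcal{E}} yg$ need not even exist for an arbitrary measurable $\mathcal{E}$, a point the paper's proof passes over silently.

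That said, your third step has a concrete gap. A portion of a perfect set is a single nonempty relatively open piece, so the portion $P$ produced by Proposition \ref{prop22} does not ``exhaust $[a,b]$ up to the portions already controlled.'' To pass from portion-wise Gel'fand integrability to uniform integrability of $\{yg : y \in B_{\mathcal{X}}\}$ on all of $[a,b]$ one needs the standard transfinite exhaustion (Cantor--Bendixson / Romanovski-type argument) used in the Denjoy--Dunford and Denjoy--Pettis theory of G\'amez--Mendoza, together with control of the series over the contiguous intervals as in Proposition \ref{prop23}; without that, uniform integrability is established only on one subinterval and the conclusion over $\mu$ on $[a,b]$ does not follow. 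As written, neither your argument nor the paper's closes this gap, but yours at least identifies where it sits.
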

  \begin{proof}
   Let $T: \mathcal{X} \to H^1(\mu) $ by $ T(y)=yg .$ Then $T$ is bounded linear operator.\\
Claim: $T$ is weakly compact.\\
If $v(\mathcal{E})(y)= \mathcal{HG}\int\limits_{\mathcal{E}} g,$ Then $v(\mathcal{E})(y) = \mathcal{H}\int\limits_{\mathcal{E}} yg $ (by Theorem \ref{thm21})\\ 
Then $v(\mathcal{E}) $ is vector measure with countably  additivity. \\
By Pettis theorem (p10 of \cite{D}) we have  $\lim\limits_{\mu(\mathcal{E}) \to 0} v(\mathcal{E})(y)=0 $ for all $ y \in \mathcal{X} .$\\
That is for $ \epsilon > 0 $ there exists a $\delta >0 $ such that 
$$\mu(\mathcal{E})< \delta  \Rightarrow  ||v(\mathcal{E})|| < \epsilon.$$
So,  $\mu(\mathcal{E})< \delta $ implies $ \sup\limits_{y \in B_{\mathcal{X}} }\int\limits_{\mathcal{E}}|yg|d \mu < \epsilon .$\\
By Dunford-Bartle-Hanse (Corollary 6, page 14 of \cite{D}), $ v(\Sigma) $ is relatively weakly compact.\\
Therefore $ \left\{ \int\limits_{\mathcal{E}} yg : \mathcal{E} \in \Sigma , y \in B_{\mathcal{X}} \right\} $ is bounded in some field $F.$\\
So,  $ \sup\limits_{y \in B_{\mathcal{X}} }\int\limits_\Omega |yg|d \mu < \infty .$\\
Thus  $\{yg : y \in B_{\mathcal{X}} \} $ is uniformly integrable. \\
By de la Vallee Poussin theorem $\{yg : y \in B_{\mathcal{X}} \} $ is relatively weakly compact in $ H^\phi(\mu) $ with $ \Delta^{'}$ condition.
\end{proof}
\begin{thm}
For a real Banach space $\mathcal{X}$ contains no copy of $ c_0,$ if $ g:[a,b] \to \mathcal{X} $ is $w\mathcal{H}$-integrable function on $[a,b]$ then $\{ y^* g: y^* \in \overline{B_{\mathcal{X}}^{*}} \} $ is relatively weakly compact in certain separable Orlicz space $  H^\phi(\mu). $
\end{thm}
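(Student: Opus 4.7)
The plan is to reduce the statement to a uniform integrability problem in $H^1(\mu)$ and then invoke the Orlicz-space version of the de la Vall\'ee-Poussin theorem (Theorem \ref{thm12}).

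First I would exploit the $c_0$-free hypothesis. Since $\mathcal{X}$ contains no copy of $c_0$ and $g$ is $w\mathcal{H}$-integrable on $[a,b]$, Lemma \ref{lemma1} (the $w\mathcal{H}$ version) gives that $g$ is Pettis integrable on $[a,b]$. Consequently $y^{*}g$ is Lebesgue integrable for every $y^{*}\in\mathcal{X}^{*}$, and the indefinite Pettis integral $\nu(\mathcal{E})=(\mathcal{P})\int_{\mathcal{E}}g\,d\mu$ is a countably additive $\mathcal{X}$-valued vector measure on $\Sigma$.

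Next I would extract uniform integrability of the scalar family. Consider the bounded linear operator $T:\mathcal{X}^{*}\to H^{1}(\mu)$ defined by $T(y^{*})=y^{*}g$; using the equivalence $H^{1}(\mu)\cong L^{1}(\mu)$ noted in \cite{BH}, countable additivity of $\nu$ implies (by the standard Bartle--Dunford--Schwartz argument used in the previous theorem) that $T$ is weakly compact. Hence the image $\{y^{*}g:y^{*}\in\overline{B_{\mathcal{X}^{*}}}\}=T(\overline{B_{\mathcal{X}^{*}}})$ is relatively weakly compact in $H^{1}(\mu)$, and therefore uniformly integrable by the Dunford--Pettis criterion. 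Alternatively, one may use the Pettis integrability of $g$ directly: for $K=\overline{B_{\mathcal{X}^{*}}}$ equi-continuous, $\{y^{*}g:y^{*}\in K\}$ is relatively weakly compact in $L^{1}(\mu)$, hence uniformly integrable.

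At this point Theorem \ref{thm12} applies: there exists an $N$-function $\phi$ satisfying the $\Delta^{\prime}$ condition such that $\{y^{*}g:y^{*}\in\overline{B_{\mathcal{X}^{*}}}\}$ is relatively weakly compact in the Orlicz space $H^{\phi}(\mu)$. The remaining point, which I expect to be the only subtle step, is \emph{separability} of $H^{\phi}(\mu)$. This follows because the $\Delta^{\prime}$ condition forces the $\Delta_{2}$ condition: taking $y=2$ in the inequality $\phi(xy)\le k\,\phi(x)\phi(y)$ yields $\phi(2x)\le (k\phi(2))\phi(x)$ for large $x$, and $\Delta_{2}$ is exactly the standard sufficient condition guaranteeing that $H^{\phi}(\mu)$ is separable (simple functions are norm-dense). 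Therefore the $N$-function $\phi$ produced above is automatically in $\Delta_{2}$, the Orlicz space $H^{\phi}(\mu)$ is separable, and $\{y^{*}g:y^{*}\in\overline{B_{\mathcal{X}^{*}}}\}$ is relatively weakly compact in this separable $H^{\phi}(\mu)$, which completes the argument.
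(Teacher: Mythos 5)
Your proposal is correct and follows essentially the same route as the paper: both pass from the $c_0$-free hypothesis and $w\mathcal{H}$-integrability to Pettis integrability, deduce uniform integrability of $\{y^*g : y^* \in \overline{B_{\mathcal{X}}^{*}}\}$ in $H^1(\mu)$, and then apply Theorem \ref{thm12} to obtain relative weak compactness in an Orlicz space $H^\phi(\mu)$. Your added justification of the separability of $H^\phi(\mu)$ via $\Delta' \Rightarrow \Delta_2$ is a detail the paper leaves implicit, and is a welcome supplement rather than a departure.
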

\begin{proof}  For a real Banach space $\mathcal{X}$ contains no copy of $ c_0,$ if $ g:[a,b] \to \mathcal{X} $ is $w\mathcal{H}$-integrable function on $[a,b]$ then $\{ y^* g: y^* \in \overline{ B_{\mathcal{X}}^{*}} \} $ is uniformly integrable in $  H^1(\mu).$\\
Theorem \ref{thm12} gives $\{ y^* g : y^* \in  \overline{B_{\mathcal{X}}^{*}} \} $ is relatively weakly compact in certain separable Orlicz space $ H^\phi(\mu).$\\
As $\mathcal{X}$ is reflective, it is true for $\{ y^* g : y^* \in  B_{\mathcal{X}}^{*} \}. $\end{proof}
\begin{rem}
From Lemma \ref{lemma1} countable additivity of $w\mathcal{H}$-integrable function can determine easily.
\end{rem}

\thebibliography{00}

\bibitem{Alex} J. Alexopoulos, De la Vallee Poussin's theorem and weakly compact sets in Orlicz spaces, Quaest Math. 17(2)(1994) 231--248. 

\bibitem{D} D. Barcenas,   C. E. Finol  \textit{On vector measures, Uniformly integrable and  Orlicz Spaces in vector measures. Integration and related Topics}: Operator Theory: Advances and Applications, Vol 201, Birkhauser Verlag Basel, 2010. pp 51-57.

\bibitem{B. Bongiorno} B. Bongiorno, L. Di Piazza, K. Musial, Differentiation of additive interval Measure with values in a conjugate Banach Space, Functiones et Approximatio  50(1)(2014) 169--180.
\bibitem{Ricardo} R. del Campo, A. Fernandez, F. Mayolal, F. Naranjo, The de la Vallee-Poussin theorem and Orlicz spaces associated to vector measure, J. Math. Anal. Appl. 470(2019) 279--291.

\bibitem{JD}J. Diestel, J. Joseph, \textit{Vector measures}, Math. Survey 15 AMS. Series Providence 1977.
\bibitem{N.Dunford} N. Dunford, J.T. Schwartz \textit{Linear Operators, Part I}, Wiley-Interscience, New York 1988.

\bibitem{Fong} C.K. Fong, A continuous version of Orlicz-Pettis theorem via vector valued Henstock-Kurzweil integrals, Canad. Math. Bull. 24(2)(1981) 169-176.
\bibitem{Gamez}J.L. Gamez, J. Mendoza,  Denjoy-Dunford and Denjoy-Pettis integral, Studia Mathematica 130(2)(1998) 115-133.
.

\bibitem{Geo} Y. Geoju, On the Henstrock-Kurzweil-Dunford and Kurzweil-Henstock-Pettis, Rocky Mountain J. Math. 39(4)(2009) 1233-1244.

\bibitem{Ye} Y. Guoju, A.N. Tianguing,  On Henstock-Dunford and Henstock-Pettis integral, Inter. J. Math. Math. Sci. 25(7)(2001) 467--478.

\bibitem{R.A} R.A. Gordon \textit{The Integrals of Lebesgue, Denjoy, Perron and Henstock}, Grad. Stud. Math 4; Amer. Math Soc. Providence 1994

\bibitem{Kao} S. Kao,  J. Gonzales, The Henstock-Kurzweil Integral, Lecture Notes, April 28, 2015 pages 10. 
\bibitem{Morrison}
 T.J. Morrison, A note on Denjoy integrability of Abstractly valued functions, Proc. Amer. Math. Soc. 61(2)(1976) 385-386.
\bibitem{Musial} K. Musial \textit{Pettis Integral, Hand book of Measure theory} 531-568 (edited by E. Pap) North Holland, Amsterdan, 2002.

\bibitem{M} M.M. Rao, Z. Ren, \textit{ The Theory of Orlicz Spaces}, Marcel Dekker, Inc. Newwork 1991.
 \bibitem{Mohammed} M. Saadoune, R. Sayyad, From weak Henstock to weak McShane integrability, Real Analysis Exchange, 38(2) (2012-2013) 447--468. 
 \bibitem{SI}   Y. Guoju,S. Schwabik, The Mcshane and weak Mcshane integrals of Banach space-valued functions defined on $\mathbb{R}^m$ ,Mathematical Notes, Miskolc, Vol. 2., No. 2., (2001), pp. 127—136
\bibitem{S} S. Schwabik,  Y. Guoju, \textit{Topics in Banach Spaces Integration, Series in Real Analysis}, Vol 10. World scientific, Singapore 2005. 

\bibitem{A.G} A.G. Stefansson, \textit{Pettis intrgrability}, Trans. Amer. Math. Soc. 330(1)(1992) 401-418
\bibitem{BH} Hemanta Kalita, Bipan Hazarika, Introduction to Henstoke Orlicz space and its dense subspace, arXiv: 1911.04885v1, 2019

\bibitem{TG} Teeper L. Gill , W.W. Zachary, A new class of Banach spaces, Journal of Physics: A mathematical and Theoretical, Dec 2008.

\end{document}